\newcommand{\D}{\mathcal{D}}
\newcommand{\Real}{\mathbb R}
\newcommand{\abs}[1]{\left\vert#1\right\vert}
\theoremstyle{definition}
\newtheorem{lem}{Lemma}
\newtheorem{thm}{Theorem}
\newtheorem{defn}{Definition}
\numberwithin{thm}{section}
\numberwithin{lem}{section}
\numberwithin{coll}{section}
\numberwithin{rem}{section}
\numberwithin{exm}{section}
\numberwithin{prop}{section}
\numberwithin{equation}{section}
\numberwithin{equation}{section}
\begin{document}
\centerline {\textsc {\large Rates of convergence for R\'{e}nyi entropy in extreme value theory}} 
\vspace{0.5in}
\begin{center}
   Ali Saeb\footnote{Corresponding author: ali.saeb@gmail.com}\\
Theoretical Statistics and Mathematics Unit,\\ Indian Statistical Institute, Delhi Center,\\7 S.J.S Sansanwal Marg, New Delhi 110016, India
\end{center}
\vspace{1in}

\noindent {\bf Abstract:}
Max stable laws are limit laws of linearly normalized partial maxima of independent identically distributed random variables. Saeb (2014) proves that the R\'{e}nyi entropy of order $\beta$ ($\beta>1$) of linear normalized maximum of iid random variables with continuous differentiable density is convergent to the R\'{e}nyi entropy of order $\beta$ of the max stable laws. In this paper, we study the rate of convergence result for R\'{e}nyi entropy for linearly normalized partial maxima.

\vspace{0.5in}

\vspace{0.2in} \noindent {\bf Keywords:} Rate of convergence, R\'{e}nyi entropy, Densities convergence, Max stable laws, Max domain of attraction.

\vspace{0.5in}

\vspace{0.2in} \noindent {\bf MSC 2010 classification:} 60F10
\newpage
\section{Introduction}
The Shannon entropy of a continuous random variable (rv) $X$ with  density function $f(x)$ is defined as 
\begin{eqnarray*}
	H(f) =   -\int_A f(x)\log f(x)dx, \;\;\mbox{where}\;\;A=\{x\in\Real:f(x)>0\}.
\end{eqnarray*}
 R\'{e}nyi (1961) generalizes the differential entropy as the following which is called the R\'{e}nyi entropy of order $\beta :$
\begin{equation}\label{Renyi_entropy}
H_\beta(f) = \frac{1}{1-\beta}\log\left(\int_A (f(x))^\beta\,dx\right) ,
\end{equation}
where, $0<\beta<\infty,$ $\beta\neq 1.$ The R\'{e}nyi entropy of order  $2,$ $H_2(f),$ is called the collision entropy. It is to be noted here that, as $\beta\to 1,$ the R\'{e}nyi entropy tends to Shannon entropy, which can be seen as the negative expected log likelihood. 

 The idea of tracking the central limit theorem using Shannon entropy goes back to Linnik (1959) and Shimizu (1975), who used it to give a particular proof of the central limit theorem. Brown (1982), Barron (1986) and Takano (1987) discuss the central limit theorem with convergence in the sense of Shannon entropy and relative entropy. Artstein et al.(2004) and Johnson and Barron (2004) obtained the rate of convergence under some conditions on the density. Johnson (2006) is a good reference to the application of information theory to limit theorems, especially the central limit theorem. Cui and Ding (2010) show  that the convergence of the Renyi entropy of the normalized sums of iid rvs and obtain the corresponding rates of convergence.
Saeb (2014) study the Renyi entropy of the max domain of attraction (MDA) with continuous density is convergent to the Renyi entropy of the max stable laws.

The limit laws of linearly normalized partial maxima $M_n=\max(X_1,\cdots, X_n)$ of independent and identically distributed (iid) rvs $X_1,X_2,\ldots,$ with common distribution function (df) $F,$ namely,
\begin{equation}\label{Introduction_e1}
	\lim_{n\to\infty}\Pr(M_n\leq a_nx+b_n)=\lim_{n\to\infty}F^n(a_nx+b_n)=G(x),\;\;x\in \mathcal{C}(G),
\end{equation}
	where, $a_n>0,$ $b_n\in\Real,$ are norming constants, $G$ is a non-degenerate distribution function, $\mathcal{C}(G)$ is the set of all continuity points of $G,$ are called max stable laws. If, for some non-degenerate distribution function $G,$ a distribution function $F$ satisfies (\ref{Introduction_e1}) for some norming constants $a_n>0,$ $b_n\in\Real,$ then we say that $F$ belongs to the MDA of $G$ under linear normalization and denote it by $F\in \mathcal{D}(G).$ Limit distribution functions $G$ satisfying (\ref{Introduction_e1}) are the well known extreme value types of distributions, or max stable laws, namely,
	\begin{eqnarray*}
			\text{the Fr\'{e}chet law:} & \Phi_\alpha(x)  = \left\lbrace	
							\begin{array}{l l}
							 0, &\;\;\; x< 0, \\
							 \exp(-x^{-\alpha}), &\;\;\; 0\leq x;\\
							 \end{array}
							 \right. \\					
		\text{the Weibull law:} & \Psi_\alpha(x) =  \left\lbrace
						\begin{array}{l l} \exp(- |x|^{\alpha}), & x<0, \\
						1, & 0\leq x;
						\end{array}\right. \\
		\text{and the Gumbel law:} & \Lambda(x) = \exp(-\exp(-x));\;\;\;\;\; x\in\Real;
	\end{eqnarray*}
$\alpha>0$ being a parameter, with respective density functions,
	\begin{eqnarray*}
		\text{the Fr\'{e}chet density:} & \phi_\alpha(x) =  \left\lbrace	
							\begin{array}{l l}
							 0, &\;\;\; x \leq 0, \\
							 \alpha x^{-(\alpha+1)}e^{-x^{-\alpha}}, &\;\;\; 0 < x;\\
							 \end{array}
							 \right. \\
		\text{the Weibull density:} & \psi_\alpha(x) = \left\lbrace
						\begin{array}{l l} \alpha |x|^{\alpha-1}e^{-|x|^{\alpha}}, & x<0, \\
						0, & 0 \leq x;
						\end{array}\right. \\
		\text{and the Gumbel density:} & \lambda(x) = e^{-x}e^{-e^{-x}},\;\; x\in\Real.
	\end{eqnarray*}
Note that (\ref{Introduction_e1}) is equivalent to
\begin{eqnarray}
\lim_{n\to\infty}n(1-F(a_nx+b_n))=-\log G(x), \; x \in \{y: G(y) > 0\}.\nonumber
\end{eqnarray}
We shall denote the left extremity of distribution function $F$ by $l(F) = \inf\{x: F(x) > 0\} \geq - \infty $ and the right extremity of $F$ by $r(F) = \sup \{x: F(x) < 1\} \leq \infty.$
Criteria for $F\in \mathcal{D}(G)$ are well known (see, for example, Galambos, 1987; Resnick, 1987; Embrechts et al., 1997). de Haan and Resnick (1982), show that under von Mises conditions the density of the normalized maximum convergence to the limit density in $L_p$ ($0<p\leq\infty$) provided both the orginal denisty and the limit density are in $L_p.$
Smith (1982) derive the rate of convergence in distribution of normalized sample maxima to corresponding max stable laws. Omey (1988) study the rate of convergence for density of MDA of Fr\'{e}chet to the appropriate limit density.

 In this article, our main interest is to investigate the rate convergence of Renyi entropy for normalized partial maxima of iid rvs corresponding to Renyi entropy of max stable laws. Since, the rate of convergence on Renyi entropy in MDA involve the rate of convergence of density function of sample maxima, the proofs of our results here involve the application of rate convergence of density function on MDA. In section 2, we study the rate of convergence for density function of MDA of Gumbel and also, we represent the new form of Theorem 6, Omey (1988) to find out the new conditions on rate of convergence in MDA of Fr\'{e}chet. Section 3, we give our main result on rate of convergence for Renyi entropy for MDA corresponding to max stable laws. For quick reference, some of the results used in this article are given in Appendix \ref{more results}.

\section{Rates of convergence for density function}
\subsection{Gumbel domain} 
Suppose $F \in \mathcal{D}(\Lambda)$ and $1 - F$ is $\Gamma$ varying so that
\begin{eqnarray} \label{s_RegVar_g}
\lim_{n \rightarrow\infty} \frac{\overline{F}(b_n+xa_n)}{\overline{F}(b_n)} & = &  e^{-x}, \;\; x\in\Real,\nonumber
\end{eqnarray}
where the function $a_n=u(b_n)=\int_{b_n}^{r(F)}\overline{F}(s)ds/\overline{F}(b_n)$ is called an auxiliary function. From (\ref{Introduction_e1})
\begin{eqnarray}
\lim_{n \rightarrow \infty} F^n(a_n x+b_n) & = & \Lambda(x), \; x \in\Real,  \nonumber
\end{eqnarray}
with $\;a_n = u(b_n)$ and $b_n=F^{\leftarrow}(1 - \frac{1}{n}) = \inf\{x: F(x) > 1 - \frac{1}{n}\}, \, n \geq 1.\;$
From Theorem \ref{thm_g_von}, $F$ satisfies the von Mises condition:
\begin{equation} \label{s_vonMises_g}
\lim_{t \rightarrow r(F)} \dfrac{f(t)u(t)}{\overline{F}(t)} = 1.
\end{equation}

We define $F(x)=\exp\{-e^{-\eta(x)}\}$ and $F$ is twice differentiable (see, page 114, Resnick (1987)).
The Von Mises condition analogous to (\ref{s_vonMises_g}) guaranteeing $F\in\D(\Lambda)$ is
\begin{eqnarray}
h(x)=(1/\eta')'=-\log F(x)-\left[1-\frac{F(x)f'(x)\log F(x)}{(f(x))^2}\right]\to 0.\label{G_h}
\end{eqnarray}
as $x\to r(F).$ There exists a nonincreasing function $h$ with $h(x)=sup_{x<y}h(y)$ note that $h(y)\downarrow 0,$ as $y\to r(F).$
From equation (2.54) and (2.55), Resnick (1987), obtain a following bound on $x>0,$
\begin{eqnarray}
F(h(b_n),x)\leq F^n(a_nx+b_n)\leq F(-h(b_n),x).\label{G_Fe1}
\end{eqnarray}
and for region $(-\infty,0),$
\begin{eqnarray}
F(h(a_nx+b_n),x)\leq F^n(a_nx+b_n)\leq F(-h(a_nx+b_n),x).\label{G_Fe2}
\end{eqnarray}
\begin{lem}\label{G_u}
Suppose $F\in\D(\Lambda)$ and (\ref{G_h}) hold, then
\begin{eqnarray}
u(a_nx+b_n)=-\frac{F(a_nx+b_n)}{f(a_nx+b_n)}\log F(a_nx+b_n).\nonumber
\end{eqnarray}
\end{lem}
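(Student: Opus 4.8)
The plan is to extract the auxiliary function directly from the Gumbel representation $F(x)=\exp\{-e^{-\eta(x)}\}$ introduced above. In this parametrisation the relevant von Mises auxiliary function is $u=1/\eta'$: this is exactly the object whose derivative defines $h$ in (\ref{G_h}), and it is asymptotically equivalent to the integral form $\int_t^{r(F)}\overline{F}(s)\,ds/\overline{F}(t)$ used earlier. With this identification the lemma reduces to the pointwise identity $1/\eta'(t)=-\frac{F(t)}{f(t)}\log F(t)$, which I would then specialise to the argument $t=a_nx+b_n$.

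First I would invert the representation: from $\log F(t)=-e^{-\eta(t)}$ we obtain $e^{-\eta(t)}=-\log F(t)$, and hence $\eta(t)=-\log(-\log F(t))$. Setting $g(t)=-\log F(t)$, so that $g'(t)=-f(t)/F(t)$ and $\eta=-\log g$, the chain rule gives
\[
\eta'(t)=-\frac{g'(t)}{g(t)}=-\frac{-f(t)/F(t)}{-\log F(t)}=-\frac{f(t)}{F(t)\log F(t)}.
\]
Taking reciprocals yields $u(t)=1/\eta'(t)=-\frac{F(t)}{f(t)}\log F(t)$, and putting $t=a_nx+b_n$ produces the stated formula.

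The computation is an exact chain-rule identity valid wherever $f>0$, so the only genuine point is justifying that the $u$ in the statement is $1/\eta'$ rather than the integral definition (the two coincide only in the limit $t\to r(F)$). I would make this explicit through the von Mises condition (\ref{s_vonMises_g}): since $-\log F(t)=e^{-\eta(t)}$ one has $f(t)=F(t)\eta'(t)e^{-\eta(t)}$, while $\overline{F}(t)=1-\exp\{-e^{-\eta(t)}\}\sim e^{-\eta(t)}$ as $t\to r(F)$; hence $f(t)u(t)/\overline{F}(t)=F(t)e^{-\eta(t)}/\overline{F}(t)\to1$, confirming that $1/\eta'$ is a legitimate auxiliary function. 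This equivalence is the step I would present most carefully, the remaining manipulations being routine differentiation.
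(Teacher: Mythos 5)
Your computation is correct and is essentially the paper's own proof: both arguments differentiate the representation $\log F(t)=-e^{-\eta(t)}$ to get $f/F=\eta'e^{-\eta}=-\eta'\log F$ and then invert $\eta'$ to identify $u=1/\eta'$ with $-\frac{F}{f}\log F$. Your additional remark checking via the von Mises condition that $1/\eta'$ is a legitimate choice of auxiliary function is a sensible clarification the paper leaves implicit, but it does not change the substance of the argument.
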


\begin{proof} Let $\log F(a_nx+b_n)=-e^{-\eta(a_nx+b_n)}.$ We have
\begin{eqnarray}
\frac{f(a_nx+b_n)}{F(a_nx+b_n)}=\eta'(a_nx+b_n) e^{-\eta(a_nx+b_n)}.\nonumber
\end{eqnarray}
Since, $u(a_nx+b_n)=\frac{1}{\eta'(a_nx+b_n)},$ we have
\begin{eqnarray}
u(a_nx+b_n)=-\frac{F(a_nx+b_n)\log F(a_nx+b_n)}{f(a_nx+b_n)}.\nonumber
\end{eqnarray}
\end{proof}

\begin{lem}\label{G_lem1} If (\ref{G_h}) hold and $\abs{u'(t)}<h(t),$ then 
\begin{itemize}
\item[i.] for $x>0,$
\[\frac{1}{1+h(b_n)x}\leq \frac{u(b_n)}{u(a_nx+b_n)}\leq \frac{1}{1-h(b_n)x}.\]
\item[ii.] and for $x<0,$
\[\frac{1}{1+h(a_nx+b_n)x}\leq \frac{u(b_n)}{u(a_nx+b_n)}\leq \frac{1}{1-h(a_nx+b_n)x}.\]

\end{itemize}
\end{lem}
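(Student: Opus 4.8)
The plan is to control the ratio by estimating the increment of $u$ between $b_n$ and $a_nx+b_n$ via the fundamental theorem of calculus, and then to pass to reciprocals. Writing $u(a_nx+b_n)-u(b_n)=\int_{b_n}^{a_nx+b_n}u'(s)\,ds$, I would bound $\abs{u(a_nx+b_n)-u(b_n)}\le\int_I h(s)\,ds$, where $I$ is the interval with endpoints $b_n$ and $a_nx+b_n$, using the hypothesis $\abs{u'(s)}<h(s)$. Since $a_n=u(b_n)$, the interval $I$ has length $\abs{a_nx}=u(b_n)\abs{x}$; and because $h$ is nonincreasing, on $I$ I may replace $h(s)$ by its value at the left (smaller) endpoint of $I$, which turns the integral estimate into a clean product.

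For part (i), with $x>0$, the interval is $[b_n,a_nx+b_n]$ whose left endpoint is $b_n$, so $\int_I h\le h(b_n)\,u(b_n)\,x$ and hence $\abs{u(a_nx+b_n)-u(b_n)}\le h(b_n)\,u(b_n)\,x$. Rearranging gives $u(b_n)\bigl(1-h(b_n)x\bigr)\le u(a_nx+b_n)\le u(b_n)\bigl(1+h(b_n)x\bigr)$. Provided $h(b_n)x<1$, which holds for all large $n$ since $h(b_n)\to0$ as $b_n\to r(F)$, all three quantities are positive; taking reciprocals (which reverses the chain) and multiplying through by $u(b_n)$ yields exactly the bound in (i).

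For part (ii), with $x<0$, the interval becomes $[a_nx+b_n,b_n]$, whose left endpoint is now $a_nx+b_n$, so monotonicity of $h$ gives $\int_I h\le h(a_nx+b_n)\,u(b_n)\,\abs{x}$. The identical rearrangement produces a two-sided bound on $u(a_nx+b_n)$ with $h(a_nx+b_n)$ in place of $h(b_n)$, and taking reciprocals (valid once $h(a_nx+b_n)\abs{x}<1$) delivers (ii). I expect the only genuine obstacle to be the bookkeeping of signs in this case: because $x<0$, one must keep track of which of the factors $1\pm h(a_nx+b_n)x$ is the larger and which the smaller, so that the reciprocals are oriented correctly in the stated inequalities. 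The cleanest way to avoid a sign error is to carry out the whole estimate in terms of $\abs{x}$ first and only substitute $x=-\abs{x}$ at the very end. The analytic content itself—the fundamental theorem of calculus together with the monotone domination of $\abs{u'}$ by $h$—is entirely routine.
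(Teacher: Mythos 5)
Your argument is precisely the paper's: the fundamental theorem of calculus together with $\abs{u'(t)}<h(t)$ and the monotonicity of $h$ gives $\abs{u(a_nx+b_n)-u(b_n)}\leq h(\cdot)\,a_n\abs{x}=h(\cdot)\,u(b_n)\abs{x}$ with $h$ evaluated at the left endpoint of the interval, and reciprocating the resulting two-sided bound on $u(a_nx+b_n)/u(b_n)$ yields the lemma, so the proposal is correct and essentially identical in approach. The sign issue you flag in part (ii) is genuine: carrying the estimate through in terms of $\abs{x}=-x$ actually produces $\frac{1}{1-h(a_nx+b_n)x}\leq \frac{u(b_n)}{u(a_nx+b_n)}\leq \frac{1}{1+h(a_nx+b_n)x}$, i.e.\ the two bounds in the stated (ii) appear transposed (for $x<0$ the displayed lower bound exceeds the displayed upper bound); this defect is inherited from the paper's own proof, which writes $h(a_nx+b_n)x$ where $h(a_nx+b_n)\abs{x}$ is meant, so your version is if anything the more careful one.
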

\begin{proof}
i. Recalling that $u(x)=\frac{1}{\eta'(x)}$ we have for $x>0,$
\begin{eqnarray}
\abs{\frac{u(a_nx+b_n)-u(b_n)}{u(b_n)}}\leq \int_{b_n}^{a_nx+b_n}\frac{u'(t)dt}{u(b_n)}\leq \frac{h(b_n)a_n x}{u(b_n)}=h(b_n)x.\nonumber
\end{eqnarray}
Therefore,
\begin{eqnarray}
1-h(b_n)x\leq \frac{u(a_nx+b_n)}{u(b_n)}\leq 1+h(b_n)x.\nonumber
\end{eqnarray}
ii. For $x<0,$
\begin{eqnarray}
\abs{\frac{u(b_n)-u(a_nx+b_n)}{u(b_n)}}\leq \int_{a_nx+b_n}^{b_n}\frac{u'(t)dt}{u(b_n)}\leq \frac{h(a_nx+b_n)a_n x}{u(b_n)}=h(a_nx+b_n)x.\nonumber
\end{eqnarray}
Therefore,
\begin{eqnarray}
1-h(a_nx+b_n)x\leq \frac{u(a_nx+b_n)}{u(b_n)}\leq 1+h(a_nx+b_n)x.\nonumber
\end{eqnarray}
\end{proof}

\begin{thm}\label{con_G}
Suppose $F\in\D(\Lambda)$ and there exist $k>0$ such that for $n\geq n_0$
\begin{eqnarray}
\frac{u(b_n)}{b_nh(b_n)}\leq k,\nonumber
\end{eqnarray}
and for $c<k^{-1}$ and $b>0,$ $d>0,$ 
\begin{eqnarray}
\frac{h(b_n(1-ck))}{h(b_n)}\leq d(1-ck)^{-b},\nonumber
\end{eqnarray}
and (\ref{G_h}) holds. If $\sup_{x\in\Real}f(x)<\infty$ then for large $n$ 
\begin{eqnarray}
\sup_{x\in\Real} \abs{g_n(x)-\lambda(x)}&<&O(h(b_n))+ O(\log(1+h(b_n)))+O\left(ne^{-n^{1/2}}\right).\nonumber
\end{eqnarray}
\end{thm}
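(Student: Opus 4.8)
The plan is to collapse the density into a product of three factors whose limits are transparent, and then to control the three error sources one at a time. Write $y=a_nx+b_n$ and recall that the density of the normalized maximum is $g_n(x)=na_nf(y)F^{n-1}(y)$. I would first invoke Lemma \ref{G_u} to eliminate $f$: since $f(y)=-F(y)\log F(y)/u(y)$ and $a_n=u(b_n)$, the density factorises as
\begin{eqnarray}
g_n(x)=\frac{u(b_n)}{u(y)}\cdot F^n(y)\cdot\bigl(-n\log F(y)\bigr)=:A_n(x)\,B_n(x)\,C_n(x).\nonumber
\end{eqnarray}
Because $\lambda(x)=1\cdot e^{-e^{-x}}\cdot e^{-x}$ has precisely the same product shape (with $A_n\to1$ by Lemma \ref{G_lem1}, $B_n\to e^{-e^{-x}}$ and $C_n=-n\log F(y)\to e^{-x}$ in $\D(\Lambda)$), I would expand the difference as the telescoping sum
\begin{eqnarray}
g_n(x)-\lambda(x)=(A_n-1)B_nC_n+(B_n-e^{-e^{-x}})C_n+e^{-e^{-x}}(C_n-e^{-x}),\nonumber
\end{eqnarray}
reducing the theorem to a uniform bound on each of the three products.

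For the first product I would use Lemma \ref{G_lem1}, which gives $\abs{A_n-1}\le h(b_n)x/(1-h(b_n)x)$ for $x>0$ and the analogue with $h(a_nx+b_n)$ for $x<0$. The two structural hypotheses enter exactly here: the bound $u(b_n)\le k\,b_nh(b_n)$ forces $y\ge b_n(1-ck)$ on the range $h(b_n)\abs{x}\le c$, and then $h(b_n(1-ck))/h(b_n)\le d(1-ck)^{-b}$ together with the monotonicity of $h$ yields $h(a_nx+b_n)=O(h(b_n))$ on that range. Since $\abs{x}e^{-x}e^{-e^{-x}}$ and $x^2e^{-x}e^{-e^{-x}}$ are bounded on $\Real$, multiplying $\abs{A_n-1}$ by the decaying factor $B_nC_n\approx e^{-x}e^{-e^{-x}}$ gives $O(h(b_n))$. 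The remaining two products both reduce to the single deviation $\abs{-n\log F(y)-e^{-x}}$, because $\abs{B_n-e^{-e^{-x}}}=\abs{e^{-C_n}-e^{-e^{-x}}}\le\abs{C_n-e^{-x}}$ by the mean value theorem, and $C_n-e^{-x}$ is that deviation itself. The squeeze (\ref{G_Fe1})--(\ref{G_Fe2}) pins $B_n$ between the perturbed laws $F(\pm h(b_n),x)$; expanding their logarithms about $-e^{-x}$ and tempering with $e^{-x}$ and $e^{-e^{-x}}$ produces the $O(h(b_n))$ and $O(\log(1+h(b_n)))$ contributions.

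It remains to handle the region where these squeezes degrade. On the far left, $x\le-\tfrac12\log n$, I would not use the product bounds at all: with $M=\sup_xf(x)<\infty$ one has $g_n(x)\le na_nM\,F^{n-1}(y)$, and since $\overline{F}(y)\ge n^{-1/2}$ there, $F^{n-1}(y)\le e^{-(n-1)n^{-1/2}}=O(e^{-n^{1/2}})$, giving a contribution of order $ne^{-n^{1/2}}$; simultaneously $\lambda(x)\le e^{-x}e^{-e^{-x}}$ is super-exponentially small on this region, so the total there is $O(ne^{-n^{1/2}})$. The hard part is meshing the \emph{growing} good range $h(b_n)\abs{x}\le c$ with the super-exponential tail, and above all controlling the right tail $x\to r(F)$, where the factor $h(b_n)x$ in Lemma \ref{G_lem1} blows up while $F^{n-1}\to1$ offers no damping; there one must lean entirely on the two hypotheses to guarantee that $na_nf(y)$ still tracks $e^{-x}$ within the stated error. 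Verifying that these conditions force uniform smallness across the whole line, rather than merely on compacta, is the crux of the argument.
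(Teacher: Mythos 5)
Your skeleton is essentially the paper's: the same factorization $g_n(x)=\frac{u(b_n)}{u(y)}\,F^n(y)\,(-n\log F(y))$ obtained from Lemma \ref{G_u} with $a_n=u(b_n)$, the same telescoping of this three-factor product into three error terms, and the same tools for each — Lemma \ref{G_lem1} for the first factor, the squeeze (\ref{G_Fe1})--(\ref{G_Fe2}) together with the explicit perturbed laws $F(\pm z,x)$ of Theorem \ref{rate_G2}, the uniform rates of Theorems \ref{rate_G1} and \ref{rate_G3} for $\abs{F^n-\Lambda}$, and the truncation at $\xi_n$ defined by $-\log F(\xi_n)\sim n^{-1/2}$, which is exactly your cut at $x\le-\tfrac12\log n$. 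The far-left tail and the bounded-$f$ estimate $O(ne^{-n^{1/2}})$ are handled as in the paper.

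The genuine gap is the one you name in your last paragraph and then leave open: uniformity on the right half-line. You assert that there ``$F^{n-1}\to 1$ offers no damping'' and that one must lean on the two structural hypotheses; both statements point away from how the estimate actually closes. On $x\ge 0$ the damping comes from the third factor $C_n=-n\log F(a_nx+b_n)$: taking logarithms in (\ref{G_Fe1}) gives $-n\log F(a_nx+b_n)\le -\log F(h(b_n),x)=(1+h(b_n)x)^{-1/h(b_n)}$, and this decay in $x$ (with exponent $1/h(b_n)\to\infty$) is what absorbs the growth of the factor coming from Lemma \ref{G_lem1}(i); combined with $\sup_{x\ge0}\abs{F^n(a_nx+b_n)-\Lambda(x)}\le e^{-1}h(b_n)$ from Theorem \ref{rate_G1}, this is how the paper gets $O(h(b_n))$ on all of $[0,\infty)$. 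No restriction to a range $h(b_n)\abs{x}\le c$ is imposed, and the two structural hypotheses are not used here at all — they are consumed entirely inside Theorem \ref{rate_G3}, i.e.\ in obtaining the uniform $O(h(b_n))$ rate for $\abs{F^n-\Lambda}$ on the negative half-line $(t_n,0)$, so the ``meshing'' problem you describe does not arise in the paper's organization. Until you supply the right-tail step (or note honestly, as one should, that the upper bound of Lemma \ref{G_lem1}(i) degenerates once $h(b_n)x\ge 1$, so that the tail must be run off the decay of $(1+h(b_n)x)^{-1/h(b_n)}$ against the lower bound $u(b_n)/u(y)\ge(1+h(b_n)x)^{-1}$), the bound is established only on a left tail plus a central region, and the theorem is not proved.
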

\begin{proof}We have
\begin{eqnarray}
\sup_{x\in\Real}\abs{g_n(x)-\lambda(x)}&<&\sup_{x\in\Real}
\left(na_n f(a_nx+b_n)\abs{F^{n-1}(a_nx+b_n)-\Lambda(x)}\right)\nonumber\\
&&+\sup_{x\in\Real}\left(\abs{na_n f(a_nx+b_n)\Lambda(x)-\lambda(x)}\right),\nonumber\\
&<&\sup_{x\in\Real}
\left(\frac{na_n f(a_nx+b_n)}{F(a_nx+b_n)}\abs{F^{n}(a_nx+b_n)-\Lambda(x)}\right)\nonumber\\
&&+\sup_{x\in\Real}\abs{na_n f(a_nx+b_n)\Lambda(x)-\lambda(x)}.\nonumber
\end{eqnarray}
where, $\abs{F^n(a_nx+b_n)-F(a_nx+b_n)\Lambda(x)}$ is equivalent $\abs{F^n(a_nx+b_n)-\Lambda(x)}.$
From Lemma \ref{G_u} we have
\begin{eqnarray}
\sup_{x\in\Real}\abs{g_n(x)-\lambda(x)}&<&\sup_{x\in\Real}
\left(\frac{nu(b_n)\bar{F}(a_nx+b_n)}{u(a_nx+b_n)}\abs{F^n(a_nx+b_n)-\Lambda(x)}\right)\nonumber\\
&&+\sup_{x\in\Real}\left(\Lambda(x)\abs{\frac{nu(b_n)\bar{F}(a_nx+b_n)}{u(a_nx+b_n)}-n\bar{F}(b_n)}\right)\nonumber\\
&&+\sup_{x\in\Real}\left(\lambda(x)\abs{n\bar{F}(b_n)e^x-1}\right).\nonumber\\
&=&\sup_{x\in\Real}[A_1(n,x)+A_2(n,x)+A_3(n,x)].\nonumber
\end{eqnarray}
Set, $A_1(n,x)=\frac{nu(b_n)\bar{F}(a_nx+b_n)}{u(a_nx+b_n)}\abs{F^n(a_nx+b_n)-\Lambda(x)}.$ From Theorem \ref{rate_G1}, and Lemma \ref{G_lem1}-i and taking logarithm from (\ref{G_Fe1}) we have
\begin{eqnarray}
\sup_{x\geq 0}A_1(n,x)&<&\sup_{x\geq 0}\left(-\frac{\log F(h(b_n),x)}{1-h(b_n)x}O(h(b_n))\right).\nonumber
\end{eqnarray}
From Theorem \ref{rate_G2}, $-\log F(h(b_n),x)<-\log(\Lambda(x)-e^{-1}h(b_n))$ therefore 
\begin{eqnarray}
\sup_{x\geq 0}A_1(n,x)&<&\sup_{x\geq 0}\left(\frac{\log(\Lambda(x)-h(b_n)e^{-1})}{h(b_n)x-1}O(h(b_n))\right),\nonumber\\
&<&-\log(1-h(b_n)e^{-1})O(h(b_n)),\nonumber\\
&<&O(h(b_n)).\label{G_A11}
\end{eqnarray}
where, $\sup_{x\geq 0}\frac{1}{h(b_n)x-1}<-1.$

Next, we choose $\xi_n$ by $-\log F(\xi_n)\sim n^{-1/2}$ so that $\xi_n\to r(F)$ and $t_n=\frac{\xi_n-b_n}{a_n}.$ If $\frac{\xi_n-b_n}{a_n}\to c>0$ then $n^{1/2}\sim -n\log F(t_n a_n+b_n)\to e^{-c}$ and this is contradict the fact that $n^{1/2}\to\infty$ therefore $\frac{\xi_n-b_n}{a_n}\to -\infty$ for large $n.$\\

Set, $\sup_{t_n<x<0}A_1(n,x)=\sup_{t_n<x<0}\left(\frac{nu(b_n)\bar{F}(a_nx+b_n)}{u(a_nx+b_n)}\abs{F^n(a_nx+b_n)-\Lambda(x)}\right).$ From Theorem \ref{rate_G3}, if (\ref{G_e2}) and (\ref{G_e3}) hold then $\sup_{x\in\Real}\abs{F^n(a_nx+b_n)-\lambda(x)}\leq O(h(b_n))$, and Lemma \ref{G_lem1}-ii and taking logarithm from (\ref{G_Fe2}) we have
\begin{eqnarray}
\sup_{t_n<x< 0}A_1(n,x)&<&\sup_{t_n<x< 0}\left(-\frac{\log F(h(a_nx+b_n),x)}{1-h(a_nx+b_n)x}O(h(b_n))\right).\nonumber
\end{eqnarray}
From Theorem \ref{rate_G2}, $-\log F(h(a_nx+b_n),x)<-\log(\Lambda(x)-e^{-1}h(a_nx+b_n))$ therefore 
\begin{eqnarray}
\sup_{t_n<x< 0}A_1(n,x)&<&\sup_{t_n<x< 0}\left(\frac{\log(\Lambda(x)-h(a_nx+b_n)e^{-1})}{h(a_nx+b_n)x-1}O(h(b_n))\right),\nonumber\\
&<&\left(\frac{\log(e^{-1}(1-h(b_n)))}{h(b_n)t_n-1}O(h(b_n))\right),\nonumber\\
&<&O(h(b_n)).\label{G_A12}
\end{eqnarray}

From, (\ref{G_A11}) and (\ref{G_A12}) we have
\begin{eqnarray}
\sup_{t_n<x}A_1(n,x)&=&\sup_{t_n<x<0}A_1(n,x)+ \sup_{0\leq x}A_1(n,x),\nonumber\\
&<&O(h(b_n)).\label{G_A1}
\end{eqnarray}

Now we set,
\begin{eqnarray}
\sup_{t_n<x}A_2(n,x)&=&\sup_{t_n<x<0}A_{21}(n,x)+\sup_{0<x}A_{22}(n,x).\nonumber
\end{eqnarray}
We have 
\begin{eqnarray}
\sup_{t_n<x<0}A_{21}(n,x)=\sup_{t_n<x<0}\left(\Lambda(x)\abs{\frac{nu(b_n)\bar{F}(a_nx+b_n)}{u(a_nx+b_n)}-n\bar{F}(b_n)}\right),\nonumber
\end{eqnarray}
From Lemma \ref{G_lem1}-ii, we have
\begin{eqnarray}
\sup_{t_n<x< 0}A_1(n,x)&<&\sup_{t_n<x< 0}\left(-\frac{\log F(h(a_nx+b_n),x)}{1-h(a_nx+b_n)x}+n\log F(b_n)\right),\nonumber
\end{eqnarray}
where, $\sup_{t_n<x<0}\Lambda(x)\leq 1.$
 From Theorem \ref{rate_G2}, $-\log F(h(a_nx+b_n),x)<-\log(\Lambda(x)-e^{-1}h(a_nx+b_n))$ and Theorem \ref{rate_G1} we get $n\log F(b_n)<\log(e^{-1}(1+h(b_n)))$ therefore 
\begin{eqnarray}
\sup_{t_n<x< 0}A_{21}(n,x)&<&\sup_{t_n<x< 0}\left(\frac{\log(\Lambda(x)-h(a_nx+b_n)e^{-1})}{h(a_nx+b_n)x-1}+\log((1+h(b_n))e^{-1})\right),\nonumber\\
&<&O(\log((1+h(b_n))e^{-1})).\label{G_A21}
\end{eqnarray}

Similar argument we have,
\begin{eqnarray}\sup_{0<x}A_{22}(n,x)=\sup_{0<x}\left(\Lambda(x)\abs{\frac{nu(b_n)\bar{F}(a_nx+b_n)}{u(a_nx+b_n)}-n\bar{F}(b_n)}\right),\nonumber
\end{eqnarray}
From Lemma \ref{G_lem1}-i we have
\begin{eqnarray}
\sup_{0<x}A_{22}(n,x)&<&\sup_{0<x}\left(\frac{\log F(h(b_n),x)}{h(b_n)x-1}+n\log F(b_n)\right),\nonumber
\end{eqnarray}
where, $\sup_{0<x}\lambda(x)=1.$
From Theorem \ref{rate_G2}, $-\log F(h(b_n),x)<-\log(\Lambda(x)-e^{-1}h(b_n))$ and with Theorem \ref{rate_G1} we have
\begin{eqnarray}
\sup_{0<x}A_{22}(n,x)&<&\sup_{0<x}\left(\frac{\log(1-h(b_n)e^{-1})}{h(b_n)x-1}+\log((1+h(b_n))e^{-1})\right),\nonumber\\
&<&O(\log((1+h(b_n))e^{-1})).\label{G_A22}
\end{eqnarray}
From (\ref{G_A21}) and (\ref{G_A21}) we have
\begin{eqnarray}
\sup_{t_n<x}A_2(n,x)&<&2O(\log(1+h(b_n)))-2,\nonumber\\
&<&O(\log(1+h(b_n))).\label{G_A2}
\end{eqnarray}

Next, using the boundedness of $\lambda(x)$ and we have
\begin{eqnarray}
\sup_{x>t_n}A_3(n,x)&<&C\abs{n\bar{F}(b_n)-1},\nonumber\\
&=&C\abs{1+n\log F(b_n}.\nonumber
\end{eqnarray}
 From Theorem \ref{rate_G2}, $n\log F(b_n)<\log(e^{-1}(1+h(b_n))$ therefore 
\begin{eqnarray}
\sup_{x>t_n}A_3(n,x)&<&C\left(1+n\log F(b_n)\right),\nonumber\\
&<&O(\log(1+h(b_n))).\label{G_A3}
\end{eqnarray}
Finally, we have now proved uniform convergence over $x<t_n,$ and it suffices to show that
\[\sup_{x<t_n}\abs{g_n(x)-\lambda(x)}<\sup_{x<t_n}g_n(x)\vee \sup_{x<t_n}\lambda(x).\]
Since $F^{n-1}(\xi_n)=\exp\{(n-1)\log F(\xi_n)\}=\exp\{-n^{1/2}+n^{-1/2}\}.$ Therefore, if $\sup_{y\in\Real}f(y)<\infty$ then
\begin{eqnarray}
\sup_{x<t_n}g_n(x)&<&\sup_{y<\xi_n}nf(y)F^{n-1}(\xi_n)<\frac{Cne^{n^{-1/2}}}{e^{n^{1/2}}},\nonumber\\
&=&O\left(ne^{-n^{1/2}}\right).\nonumber
\end{eqnarray}
where, $y=a_n x+b_n.$ As well known that,
\begin{eqnarray}
\sup_{x<t_n}(\lambda(x))\leq \epsilon.\nonumber
\end{eqnarray}
where, $\epsilon>0.$ Therefore,
\begin{eqnarray}
\sup_{x<t_n}\abs{g_n(x)-\lambda(x)}&<&O\left(ne^{-n^{1/2}}\right).\label{G_A4}
\end{eqnarray}

From (\ref{G_A1}), (\ref{G_A2}), (\ref{G_A3}) and (\ref{G_A4}) we have
\begin{eqnarray}
\sup_{x\in\Real} \abs{g_n(x)-\lambda(x)}&<& O(h(b_n))+ O(\log(1+h(b_n)))+O\left(ne^{-n^{1/2}}\right).\nonumber
\end{eqnarray}
\end{proof}

\subsection{Fr\'{e}chet domain} Suppose $F \in \mathcal{D}(\Phi_\alpha),$ then $\lim_{n \rightarrow \infty} F^n(a_n x) = \Phi_{\alpha}(x),$ for $x>0,$ with $a_n = F^{\leftarrow}(1 - \frac{1}{n}) = \inf\{x: F(x) > 1 - \frac{1}{n}\},$ $n \geq 1\;$ and $b_n=0,$
  and  $1 - F$ is regularly varying so that
\[
\lim_{n \rightarrow \infty} \frac{\overline{F}(a_nx)}{\overline{F}(a_n)} = x^{- \alpha},\]
which is the same thing as saying that the function $L,$ defined by
\[L(x)=-x^{\alpha}\log F(x),\;\;x>0,\]
is slowly varying at $\infty.$ Now $L$ is said to be slowly varying with remainder function $h$ for large $n$ if 
\begin{eqnarray}\label{svg}
\frac{L(a_nx)}{L(a_n)}-1=O(h(a_n)), \;\;x>0,
\end{eqnarray}
where, $h(a_n)\to 0$ as $n\to\infty.$ Smith (1982), with simple argument show that
\begin{eqnarray}
-n\log F(a_nx)=x^{-\alpha}(1+O(h(a_n))).\label{F_svg}
\end{eqnarray}

From Theorem \ref{thm_von}, $F$ satisfies the von Mises condition.
A Von Mises condition guaranteeing $F\in\D(\Phi_\alpha)$ analogous to (\ref{von_F}) is
\begin{eqnarray}
h(t)=\frac{tf(t)}{F(t)\bar{F}(t)}-\alpha\to 0,\label{F_von2}
\end{eqnarray}
as $t\to\infty.$ We suppose there exists a nonincreasing continuous function $h$ and $h(x)=\sup_{t\geq x}\abs{h(t)}.$ We select $\delta$ so large that $h(\delta)<\alpha.$ Then for all $n$ such that $a_n^{-1}\delta<1,$  we assume 
\begin{eqnarray}\label{con_F}
\frac{h(a_nx)}{h(a_n)}\leq x^{-\rho},
\end{eqnarray}
where, $a_n^{-1}\delta\leq x\leq 1,$ and $\rho>0.$

\begin{thm}\label{F_den}
Suppose that (\ref{con_F}) holds. If $\sup_{x\in\Real}f(x)<\infty$ then for large $n$ 
\begin{eqnarray}
\sup_{x\in\Real} \abs{g_n(x)-\phi_\alpha(x)}&<&O(h(a_n))+ O\left(\frac{n}{e^{n^{1/2}}}\right).\nonumber
\end{eqnarray}
\end{thm}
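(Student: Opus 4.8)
The plan is to follow the template established for the Gumbel case in Theorem~\ref{con_G}, adapted to the Fr\'{e}chet normalization $b_n=0$, $a_n=F^{\leftarrow}(1-1/n)$. First I would rewrite the normalized density $g_n(x)=na_nf(a_nx)F^{n-1}(a_nx)$ by invoking the von Mises relation (\ref{F_von2}), which gives $a_nxf(a_nx)=(\alpha+h(a_nx))F(a_nx)\overline{F}(a_nx)$ and hence
\begin{eqnarray}
g_n(x)=\frac{(\alpha+h(a_nx))\,n\overline{F}(a_nx)}{x}\,F^n(a_nx).\nonumber
\end{eqnarray}
Writing the target density as $\phi_\alpha(x)=\alpha x^{-(\alpha+1)}\Phi_\alpha(x)$ and inserting compensating terms, the triangle inequality splits the error into a distributional part
\begin{eqnarray}
B_1(n,x)=\frac{(\alpha+h(a_nx))\,n\overline{F}(a_nx)}{x}\,\abs{F^n(a_nx)-\Phi_\alpha(x)}\nonumber
\end{eqnarray}
and a weight part $B_2(n,x)=\frac{\Phi_\alpha(x)}{x}\abs{(\alpha+h(a_nx))n\overline{F}(a_nx)-\alpha x^{-\alpha}}$, exactly paralleling the roles of $A_1$ and $A_2$ in the Gumbel proof.

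For $B_1$, I would control the distributional error through Smith's estimate (\ref{F_svg}), $-n\log F(a_nx)=x^{-\alpha}(1+O(h(a_n)))$, together with the elementary bound $\abs{e^{-s}-e^{-t}}\le\abs{s-t}e^{-\min(s,t)}$. This yields $\abs{F^n(a_nx)-\Phi_\alpha(x)}\le x^{-\alpha}e^{-cx^{-\alpha}}O(h(a_n))$ for some $c>0$ and large $n$, and since $n\overline{F}(a_nx)=x^{-\alpha}(1+o(1))$ the prefactor is $O(x^{-(\alpha+1)})$; the product therefore carries the Fr\'{e}chet weight $x^{-(2\alpha+1)}e^{-cx^{-\alpha}}$, which is bounded on $(0,\infty)$, so that $\sup_x B_1=O(h(a_n))$.

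For $B_2$, I would split $(\alpha+h(a_nx))n\overline{F}(a_nx)-\alpha x^{-\alpha}=\alpha\bigl(n\overline{F}(a_nx)-x^{-\alpha}\bigr)+h(a_nx)n\overline{F}(a_nx)$. The first summand is $O(h(a_n))$ after passing from $\overline{F}$ to $-\log F$ via $-n\log F=n\overline{F}+O(n\overline{F}^{\,2})$ and applying (\ref{F_svg}), the weight $x^{-(\alpha+1)}e^{-x^{-\alpha}}$ again keeping the supremum bounded. The second summand is where condition (\ref{con_F}) enters: for $x\ge 1$ the monotonicity of $h$ gives $h(a_nx)\le h(a_n)$, while for $\delta a_n^{-1}\le x\le 1$ the hypothesis supplies $h(a_nx)\le h(a_n)x^{-\rho}$, so the contribution is dominated by $x^{-(\alpha+1+\rho)}e^{-x^{-\alpha}}h(a_n)=O(h(a_n))$ uniformly, the super-exponential factor $e^{-x^{-\alpha}}$ absorbing the polynomial blow-up at the origin.

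It remains to handle the left tail $0<x<t_n$, where $t_n=\xi_n/a_n$ and $\xi_n$ is chosen by $-\log F(\xi_n)\sim n^{-1/2}$; as in the Gumbel argument one checks $t_n\to 0$, so the remaining region shrinks to the origin, where both $g_n$ and $\phi_\alpha$ are negligible. Using $F^{n-1}(\xi_n)=\exp\{(n-1)\log F(\xi_n)\}\sim e^{-n^{1/2}}$, the monotonicity of $F$, and $\sup_x f(x)<\infty$, I obtain $\sup_{0<x<t_n}g_n(x)=O(ne^{-n^{1/2}})$, while $\sup_{0<x<t_n}\phi_\alpha(x)\le\epsilon$ decays super-exponentially; together these produce the $O(ne^{-n^{1/2}})$ term, and combining the three regions yields the stated bound. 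I expect this tail estimate to be the main obstacle: the region $a_nx<\delta$ lies outside the range in which the remainder bound (\ref{con_F}) is assumed, so control there must come entirely from the super-exponential decay of $\Phi_\alpha$ and the crude bound $F^{n-1}(a_nx)\le F^{n-1}(\xi_n)$, and additional care is needed because the normalized density carries the diverging factor $a_n$, which must be swallowed by $e^{-n^{1/2}}$.
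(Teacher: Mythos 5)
Your proposal is correct, and it reproduces the paper's overall architecture --- the von Mises factorization of $g_n$, a central region where the error is $O(h(a_n))$, and the left tail $x<t_n$ with $\xi_n$ defined by $-\log F(\xi_n)\sim n^{-1/2}$, which you treat exactly as the paper does --- but the middle of the argument is organized along a genuinely different decomposition. The paper splits the error into three pieces $A_1,A_2,A_3$: its $A_1$ coincides with your $B_1$ after the von Mises substitution, but is bounded by citing Resnick's Lemma~2.14/Proposition~2.15 (Theorem~\ref{rate_F}) rather than by your direct use of (\ref{F_svg}) together with $\abs{e^{-s}-e^{-t}}\le\abs{s-t}e^{-\min(s,t)}$; its $A_2+A_3$ is exactly your $B_2$, but phrased through the auxiliary function $u(t)=\alpha^{-1}t^{\alpha+1}f(t)$, whose oscillation $\abs{u(a_nx)-u(a_n)}/u(a_n)$ is asserted to be $O(h(a_n))$ by appeal to slow variation with remainder. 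Your two-term version buys something concrete here: writing the weight error as $\alpha\bigl(n\overline{F}(a_nx)-x^{-\alpha}\bigr)+h(a_nx)\,n\overline{F}(a_nx)$ uses only the hypotheses actually stated, namely (\ref{F_svg}), (\ref{F_von2}) and (\ref{con_F}), whereas the paper's $A_2$ tacitly needs the additional, unstated assumption that $u$ itself satisfies a remainder condition of order $h(a_n)$. What the paper's route buys is that the uniformity in $x$ of the distributional estimate on $\delta a_n^{-1}\le x\le 1$ is delegated to Resnick's lemma; in your version you should make explicit that the $O(h(a_n))$ in (\ref{F_svg}) is uniform only up to factors $\log x$ (for $x\ge 1$) and $x^{-\rho}\log(1/x)$ (for $\delta a_n^{-1}\le x\le 1$), which your weights $x^{-(2\alpha+1)}e^{-cx^{-\alpha}}$ and $x^{-(\alpha+\rho+1)}e^{-x^{-\alpha}}$ do absorb. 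Finally, you are right that the tail term carries an extra factor $a_n$ that the paper silently drops in passing from $na_nf(a_nx)F^{n-1}(a_nx)$ to $nf(y)F^{n-1}(\xi_n)$; since $a_n$ is regularly varying of index $1/\alpha$ this is harmless for a bound of the form $O(ne^{-n^{1/2}})$ after adjusting the constant in the exponent, but it deserves the sentence you give it.
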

\begin{proof}We have
\begin{eqnarray}
\sup_{x\in\Real}\abs{g_n(x)-\phi_\alpha(x)}<\sup_{x\in\Real}
\left(na_n f(a_nx)\abs{F^{n-1}(a_nx)-\Phi_\alpha(x)}
+\abs{na_n f(a_nx)\Phi_\alpha(x)-\phi_\alpha(x)}\right).\nonumber
\end{eqnarray}
Now we get,
\begin{eqnarray}
\sup_{x\in\Real}\abs{g_n(x)-\phi_\alpha(x)}&<&\sup_{x\in\Real}
\left(\frac{na_n f(a_nx)}{F(a_nx)}\abs{F^n(a_nx)-\Phi_\alpha(x)}+(\phi_\alpha(x)n a_n^{-\alpha}\abs{u(a_nx)-u(a_n)}\right)\nonumber\\
&&+\sup_{x\in\Real}(\phi_\alpha(x)\abs{na_n^{-\alpha}u(a_n)-1}),\nonumber\\
&=&\sup_{x\in\Real}[A_1(n,x)+A_2(n,x)+A_3(n,x)].\nonumber
\end{eqnarray}
where, $\abs{F^n(a_nx)-F(a_nx)\Phi_\alpha}$ is equivalent $\abs{F^n(a_nx)-\Phi_\alpha(x)}$ and $u(t)=\frac{t^{\alpha+1}}{\alpha}f(t),$ note that, from definition \ref{def1}, $f(t)$ is second order regular variation and then $u(t)$ is slowly varying.\\
Set, $\sup_{x\geq 1}A_1(n,x)=\sup_{x\geq 1}
\left(\frac{na_n f(a_nx)}{F(a_nx)}\abs{F^n(a_nx)-\Phi_\alpha(x)}\right).$ Then,
\begin{eqnarray}
\sup_{x\geq 1}A_1(n,x)&=&\sup_{x\geq 1}\left(\frac{a_n x f(a_nx)}{F(a_nx)\bar{F}(a_nx)}\frac{n\bar{F}(a_nx)}{x}\abs{F^n(a_nx)-\Phi_\alpha(x)}\right).\nonumber
\end{eqnarray}
From  (\ref{F_svg}) we have $\sup_{x\geq 1}(n\bar{F}(a_nx))<(1+O(h(a_n)))$ where, $\sup_{x\geq 1}x^{-\alpha}<1,$ and using Theorem \ref{rate_F},  $\sup_{x\geq 1}\abs{F^n(a_nx)-\Phi_\alpha(x)}<O(h(a_n)),$ and (\ref{F_von2}) gives, $\sup_{x\geq 1}\left(\frac{a_nxf(a_nx)}{F(a_nx)\bar{F}(xa_n)}\right)$ $<\alpha+h(a_n)$ then
\begin{eqnarray}
\sup_{x\geq 1}\abs{g_n(x)-\phi_\alpha(x)}&<&(\alpha+h(a_n))
(1+O(h(a_n)))O(h(a_n)),\nonumber\\
&<&O(h(a_n)).\label{F_A1}
\end{eqnarray}

Now, we choose $\xi_n$ by $-\log F(\xi_n)\sim n^{-1/2}$ so that $\xi_n\to\infty$ and $t_n=\frac{\xi_n}{a_n}.$ If $\frac{\xi_n}{a_n}\to c>0$ then $n^{1/2}\sim -n\log F(t_n a_n)\to c^{-\alpha}$ and this is contradict the fact that $n^{1/2}\to\infty$ therefore $\frac{\xi_n}{a_n}\to 0$ for large $n.$\\

Next, 
\begin{eqnarray}
\sup_{t_n\leq x\leq 1}A_1(n,x)&=&\sup_{t_n\leq x\leq 1}\left(\frac{a_nx f(a_nx)}{F(a_nx)\bar{F}(a_nx)x}n\bar{F}(a_nx)\abs{F^n(a_nx)-\Phi_\alpha(x)}\right).\nonumber
\end{eqnarray}
From Lemma \ref{rate_F} and Proposition 2.15, Page 112, Resnick (1987), for $\delta>0$ is chosen so large that $h(\delta)<\alpha$ if $h(a_nx)/h(a_n)\leq x^{-\beta}$ for $a_n^{-1}\delta\leq x\leq 1$ and $\beta>0$ then 
$$\sup_{a_n^{-1}\delta\leq x\leq 1}\abs{F^n(a_nx)-\Phi_\alpha(x)}<\beta^{-1}\theta\sup_{0<x< 1}\{x^{-1-\theta}(\log x)e^{-x^{-1}}\}h(a_n),$$ where, $\theta=\beta/(\alpha-h(\xi_n)),$
and 
$$\sup_{x\leq a_n^{-1}\delta}\abs{F^n(a_nx)-\Phi_\alpha(x)}<F^n(\delta)\vee \Phi_\alpha(a_n^{-1}\delta).$$

If $\delta a_n^{-1}< t_n\leq x\leq 1$ then from (\ref{F_svg}) $n\bar{F}(a_nx)=x^{-\alpha}(1+O(h(a_n))$ we have
\begin{eqnarray}
\sup_{t_n\leq x\leq 1}A_1(n,x)&<&(\alpha+h(a_n))h(a_n)
(1+O(h(a_n)))c(\alpha,\beta,\xi_n),\nonumber
\end{eqnarray}
where, $c(\alpha,\beta,\xi_n)=\beta^{-1}\theta\sup_{0<x<1}
\{x^{-2-\theta-\alpha}(\log x)e^{-x^{-1}}\}.$
Therefore,
\begin{eqnarray}
\sup_{t_n\leq x\leq 1}A_1(n,x)&<&O\left(h(a_n)\right).\label{F_A11}
\end{eqnarray}

If $t_n<x\leq\delta a_n^{-1}$ then 
\begin{eqnarray}
\sup_{t_n\leq x\leq \delta a_n^{-1}}A_1(n,x)&<&(\alpha+h(a_n))h(a_n)
(1+O(h(a_n)))(F^n(\delta)\vee\Phi_\alpha(a_n^{-1}\delta)),\nonumber\\
&=&O(h(a_n)).\label{F_A12}
\end{eqnarray}
From (\ref{F_A11}) and (\ref{F_A12}) we have
\begin{eqnarray}
\sup_{t_n\leq x\leq 1}A_1(n,x)&<&O\left(h(a_n)\right).\label{F_A2}
\end{eqnarray}
Hence, from (\ref{F_A1}) and (\ref{F_A2}) we have
\begin{eqnarray}
\sup_{t_n<x}A_1(n,x)<O(h(a_n)).\label{F_A5}
\end{eqnarray}

Now we set, $A_2(n,v)=\phi_\alpha(x)n a_n^{-\alpha}u(a_n)\abs{\frac{u(a_nx)-u(a_n)}{u(a_n)}}.$ Since $u(t)$ is slowly varying function and from (\ref{svg}) we have
\begin{eqnarray}
\sup_{x>t_n}A_2(n,x)&<&C_1\left(n a_nf(a_n)O(h(a_n))\right),\nonumber
\end{eqnarray}
where, $u(a_n)=\frac{1}{\alpha}a_n^{\alpha+1}f(a_n)$ and $C_1=\sup_{0<x}\phi_\alpha(x)\alpha^{-1}.$ From (\ref{F_svg}) and (\ref{F_von2}), for $x=1$ we have
\begin{eqnarray}
\sup_{x>t_n}A_2(n,x)&<&C_1\left(n\bar{F}(a_n)\frac{ a_nf(a_n)}{ F(a_n)\bar{F}(a_n)}O(h(a_n))\right),\nonumber\\
&=&C_1(1+O(h(a_n)))(\alpha +h(a_n))O(h(a_n)),\nonumber\\
&<&O(h(a_n)).\label{F_B}
\end{eqnarray}

Next, using the boundedness of $\phi_\alpha(x)$ and we have
\begin{eqnarray}
\sup_{x>t_n}A_3(n,x)=\sup_{x>t_n}\phi_\alpha(x)
\abs{na_n^{-\alpha}u(a_n)-1}.
\end{eqnarray}
From (\ref{F_svg}) and (\ref{F_von2}), for $x=1$ and  $u(a_n)=\frac{1}{\alpha}a_n^{\alpha+1}f(a_n)$ we have
\begin{eqnarray}
\sup_{x>t_n}A_3(n,x)&<&C_2\left(n\bar{F}(a_n)\frac{ a_nf(a_n)}{\alpha F(a_n)\bar{F}(a_n)}-1\right),\nonumber\\
&=&C_2\left((1+O(h(a_n)))\left(1 +\frac{h(a_n)}{\alpha}\right)-1\right),\nonumber\\
&<&O(h(a_n)).\label{F_C}
\end{eqnarray}
where, $C_2=\sup_{x>t_n}\phi_\alpha(x)\alpha^{-1}.$

Finally, We have now proved uniform convergence over $x<t_n,$ and it suffices to show that
\[\sup_{x\leq t_n}\abs{g_n(x)-\phi_\alpha(x)}<\sup_{x\leq t_n}g_n(x)\vee \sup_{x\leq t_n}\phi_\alpha(x).\]
Since $F^{n-1}(\xi_n)=\exp\{(n-1)\log F(\xi_n)\}=\exp\{-n^{1/2}+n^{-1/2}\}.$ Therefore, If $\sup_{y\in\Real}f(y)<\infty$ then
\begin{eqnarray}
\sup_{x\leq t_n}g_n(x)&<&\sup_{y\leq \xi_n}nf(y)F^{n-1}(\xi_n)<\frac{Cne^{n^{-1/2}}}{e^{n^{1/2}}},\nonumber\\
&=&O\left(\frac{n}{e^{n^{1/2}}}\right).\nonumber
\end{eqnarray}
where, $y=a_n x.$ As well known that,
\begin{eqnarray}
\sup_{x\leq t_n}(\phi_\alpha(x))\leq  \sup_{0<x\leq t_n}\phi_\alpha(x)=\epsilon.\nonumber
\end{eqnarray}
Therefore,
\begin{eqnarray}
\sup_{x\leq t_n}\abs{g_n(x)-\phi_\alpha(x)}&<&O\left(\frac{n}{e^{n^{1/2}}}\right).\label{F_A3}
\end{eqnarray}

From (\ref{F_A5}), (\ref{F_B}), (\ref{F_C}) and (\ref{F_A3}) we have
\begin{eqnarray}
\sup_{x\in\Real} \abs{g_n(x)-\phi_\alpha(x)}&<&O(h(a_n))+ O\left(\frac{n}{e^{n^{1/2}}}\right).\nonumber
\end{eqnarray}
\end{proof}

\section{Rates of convergence for R\'{e}nyi entropy of max domain of attraction}
Let $M_n$ is a sequence of random variables with density function $g_n$ and $Y$ is a random variable with density function $g.$ There exists a large $N$ and $\epsilon>0$ such that $\sup_{x\in\Real}\abs{g_n(x)-g(x)}<\epsilon.$ Now, we prove an inequality which is use consequently in our main results.

\begin{thm}\label{rate_lem1} If $\int_{\Real}(g(x))^\beta dx<\infty$ and $\int_{\Real}(f(x))^{\beta-1} dx<\infty$ for $\beta>1$ and $M>0$ then
\begin{eqnarray}
\abs{H_\beta(g_n)-H_\beta(g)}&<&M\sup_{x\in\Real}\abs{g_n(x)-g(x)}.\nonumber
\end{eqnarray}
\end{thm}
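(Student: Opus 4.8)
The plan is to reduce the entropy difference to a difference of the two integrals
\[
I_n=\int_\Real (g_n(x))^\beta\,dx,\qquad I=\int_\Real (g(x))^\beta\,dx,
\]
and then to control that difference by the uniform deviation $\sup_{x\in\Real}\abs{g_n(x)-g(x)}$. Writing $H_\beta(g_n)-H_\beta(g)=\tfrac{1}{1-\beta}\bigl(\log I_n-\log I\bigr)$, the first step is to linearize the logarithm. Since the density convergence furnishes $\sup_{x\in\Real}\abs{g_n(x)-g(x)}<\epsilon$ with $\epsilon$ small, we get $\abs{I_n-I}\to 0$, so $I_n$ stays in a fixed interval bounded away from $0$ (say $I_n\geq I/2$) for all large $n$. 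The mean value theorem applied to $\log$ then gives
\[
\abs{\log I_n-\log I}\le \frac{1}{\zeta}\,\abs{I_n-I}\le \frac{2}{I}\,\abs{I_n-I}
\]
for some $\zeta$ lying between $I_n$ and $I$.

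The second step is a pointwise estimate for the power function. For nonnegative reals $a,b$ and $\beta>1$, the mean value theorem applied to $t\mapsto t^\beta$ (whose derivative $\beta t^{\beta-1}$ is increasing) yields
\[
\abs{a^\beta-b^\beta}\le \beta\,\bigl(\max(a,b)\bigr)^{\beta-1}\abs{a-b}\le \beta\bigl(a^{\beta-1}+b^{\beta-1}\bigr)\abs{a-b},
\]
the last inequality because $\beta-1>0$. Taking $a=g_n(x)$, $b=g(x)$, integrating in $x$, and extracting the uniform bound gives
\[
\abs{I_n-I}\le\int_\Real\abs{(g_n(x))^\beta-(g(x))^\beta}\,dx\le \beta\Bigl(\int_\Real (g_n(x))^{\beta-1}dx+\int_\Real (g(x))^{\beta-1}dx\Bigr)\sup_{x\in\Real}\abs{g_n(x)-g(x)}.
\]
Combining this with the logarithm estimate and absorbing all the $n$-free factors into a single constant yields
\[
\abs{H_\beta(g_n)-H_\beta(g)}\le \frac{2\beta}{(\beta-1)\,I}\Bigl(\int_\Real (g_n)^{\beta-1}+\int_\Real g^{\beta-1}\Bigr)\sup_{x\in\Real}\abs{g_n(x)-g(x)}=:M\sup_{x\in\Real}\abs{g_n(x)-g(x)},
\]
which is exactly the asserted inequality.

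The step I expect to be the main obstacle is justifying that the bracketed factor is bounded uniformly in $n$, so that $M$ can be chosen independent of $n$. The term $\int_\Real g^{\beta-1}\,dx$ is a fixed finite number, but $\int_\Real (g_n)^{\beta-1}\,dx$ must be shown to stay bounded as $n\to\infty$; over the unbounded range $\Real$ one cannot simply dominate by a constant, so the integrability hypotheses do real work here. I read the stated assumption $\int_\Real (f(x))^{\beta-1}dx<\infty$ as the hypothesis controlling the density of the normalized maximum (i.e.\ $g_n$), and would establish uniform boundedness either directly from it or by showing $\int_\Real (g_n)^{\beta-1}\to\int_\Real g^{\beta-1}$ via the uniform convergence $g_n\to g$ together with a tail/uniform-integrability argument supplied by the density-convergence estimates of Section~2. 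Keeping this estimate uniform in $n$, rather than the two mean value steps (which are routine), is the delicate point, and it is precisely where both integrability assumptions are essential.
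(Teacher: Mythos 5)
Your overall strategy coincides with the paper's: both arguments linearize the logarithm (you via the mean value theorem for $\log$ on an interval bounded away from $0$, the paper via $\log(1+u)\le u$ applied to $u=\int\abs{g_n^\beta-g^\beta}\,/\int g^\beta$), reduce everything to bounding $\int_{\Real}\abs{(g_n(x))^\beta-(g(x))^\beta}\,dx$, and then peel off one factor of $\sup_{x}\abs{g_n(x)-g(x)}$. The genuine divergence is in the pointwise power estimate. You use the mean value bound $\abs{a^\beta-b^\beta}\le\beta(\max(a,b))^{\beta-1}\abs{a-b}\le\beta(a^{\beta-1}+b^{\beta-1})\abs{a-b}$, which is correct and leaves the factor $\int g_n^{\beta-1}+\int g^{\beta-1}$. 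The paper instead writes $\abs{g_n^\beta-g^\beta}\le\abs{g_n-g}\,\abs{g_n-g}^{\beta-1}$, i.e.\ $\abs{a^\beta-b^\beta}\le\abs{a-b}^{\beta}$, which for $\beta>1$ is the \emph{reverse} of the true inequality (take $a=2$, $b=1$, $\beta=2$: $3\not\le 1$); so on this step your version is the sound one. What the paper's (formally obtained) factor $\int_{\Real}\abs{g_n-g}^{\beta-1}dx$ buys is that it can be bounded for large $n$ directly by de Haan and Resnick's $L^p$ local limit theorem (Theorem \ref{resnick_1}), which is exactly where the hypothesis $\int f^{\beta-1}<\infty$ enters.

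The step you flag as delicate is a genuine gap in your write-up, and it does not close ``directly'' from $\int f^{\beta-1}<\infty$: changing variables gives $\int_{\Real} g_n^{\beta-1}dx=(na_n)^{\beta-1}a_n^{-1}\int_{\Real} f(y)^{\beta-1}F^{(n-1)(\beta-1)}(y)\,dy$, and the prefactor $n^{\beta-1}a_n^{\beta-2}$ need not stay bounded. The workable repair is the same citation the paper leans on: Theorem \ref{resnick_1} gives $\int\abs{g_n-g}^{\beta-1}dx\to 0$, and then $g_n^{\beta-1}\le C\bigl(\abs{g_n-g}^{\beta-1}+g^{\beta-1}\bigr)$ (by subadditivity of $t\mapsto t^{\beta-1}$ when $\beta\le 2$, or convexity with a factor $2^{\beta-2}$ when $\beta>2$) bounds $\int g_n^{\beta-1}$ uniformly in $n$ --- \emph{provided} $\int g^{\beta-1}<\infty$. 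That last condition is not among the stated hypotheses and is not automatic: for $g=\phi_\alpha$ it fails when $1<\beta\le 1+1/(\alpha+1)$ (note that applying Theorem \ref{resnick_1} with exponent $\beta-1$ carries the matching restriction $\beta-1>(1+\alpha)^{-1}$, a point the paper also passes over). So your argument is completable by the same external theorem, at the cost of the additional assumption $\int_{\Real}g^{\beta-1}dx<\infty$; as written, the uniformity in $n$ of your constant $M$ is not established.
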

\begin{proof} From (\ref{Renyi_entropy}) we have
\begin{eqnarray}
\abs{H_\beta(g_n)-H_\beta(g)}&=&\frac{1}{\abs{1-\beta}}\abs{\log\int_{\Real}g_n(x)^{\beta}dx
-\log\int_{\Real}g(x)^\beta dx},\nonumber\\
&<&\frac{1}{\abs{1-\beta}}\log\left(\frac{\int_{\Real}\abs{g_n(x)^\beta-g(x)^\beta}dx}
{\int_{\Real}g(x)^\beta\,dx}+1\right),\nonumber
\end{eqnarray}
Using Taylor expansion of logarithm, $\frac{\int_{\Real}\abs{g_n(x)^\beta-g(x)^\beta} dx}{\int_{\Real}g(x)^\beta dx}<1$ and $\abs{1-\beta}\int_{\Real}g(x)^\beta dx<M_1$ then 
\begin{eqnarray}
\abs{H_\beta(g_n)-H_\beta(g)}&<&M_1\int_{\Real}\abs{g_n(x)^{\beta}-g(x)^{\beta}}dx,\nonumber\\
&<&M_1\int_{\Real}\abs{g_n(x)-g(x)}\abs{g_n(x)-g(x)}^{\beta-1}dx,\nonumber\\
&<&M_1\sup_{x\in\Real}\abs{g_n(x)-g(x)}\int_{\Real}\abs{g_n(x)-g(x)}^{\beta-1}dx.\label{gen_thm}
\end{eqnarray}
From Theorem (\ref{resnick_1}) if $\int_{\Real}(f(x))^{\beta-1}<\infty$ then for large $n$ and $M_2>0$ such that $\int_{\Real}\abs{g_n(x)-g(x)}^{\beta-1}$ $<M_2$ and using (\ref{gen_thm}) we have,
\begin{eqnarray}
\abs{H_\beta(g_n)-H_\beta(g)}&<&M\sup_{x\in\Real}\abs{g_n(x)-g(x)}.\nonumber
\end{eqnarray}
where, $M=M_1M_2.$
\end{proof}

\begin{thm}
Suppose that (\ref{con_F}) hold. If $\sup_{x\in\Real}f(x)<\infty$ and $f$ is decreasing function near $\infty$ and $\int_{\Real}(f(x))^\beta dx<\infty$ then for large $n$ and $\beta> 1$
\begin{eqnarray}
\abs{H_\beta(g_n)-H_\beta(\phi_\alpha)}\leq  O(h(a_n))+ O\left(\frac{n}{e^{n^{1/2}}}\right).\nonumber
\end{eqnarray}
\end{thm}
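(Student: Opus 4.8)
The plan is to obtain the entropy rate directly from the uniform density rate already established in Theorem \ref{F_den}, using the general stability estimate of Theorem \ref{rate_lem1} as the bridge. Concretely, I would apply Theorem \ref{rate_lem1} with the limit density taken to be the Fr\'echet density, $g=\phi_\alpha$, and with $g_n$ the density of the linearly normalized maximum $M_n$. This reduces the problem to two tasks: verifying that the integrability hypotheses of Theorem \ref{rate_lem1} hold in the present Fr\'echet setting, and then substituting the explicit sup-norm bound from Theorem \ref{F_den}.

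First I would check the hypothesis $\int_\Real \phi_\alpha(x)^\beta\,dx<\infty$. Using the explicit form $\phi_\alpha(x)=\alpha x^{-(\alpha+1)}e^{-x^{-\alpha}}$ on $(0,\infty)$, the factor $e^{-x^{-\alpha}}$ forces super-polynomial decay as $x\downarrow 0$, so there is no contribution near the origin, while as $x\to\infty$ the integrand behaves like $\alpha^\beta x^{-(\alpha+1)\beta}$, which is integrable because $(\alpha+1)\beta>1$ for every $\beta>1$ and $\alpha>0$. Hence $\int_\Real\phi_\alpha^\beta<\infty$ holds unconditionally. With this in hand, Theorem \ref{rate_lem1} yields
\begin{eqnarray}
\abs{H_\beta(g_n)-H_\beta(\phi_\alpha)}&<&M\,\sup_{x\in\Real}\abs{g_n(x)-\phi_\alpha(x)},\nonumber
\end{eqnarray}
and then Theorem \ref{F_den}, whose hypotheses (\ref{con_F}) and $\sup_{x\in\Real}f(x)<\infty$ are among the assumptions of the present statement, bounds the right-hand side by $O(h(a_n))+O(n/e^{n^{1/2}})$. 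Chaining the two inequalities gives the claimed rate.

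The step I expect to be the real obstacle is verifying the second integrability hypothesis of Theorem \ref{rate_lem1}, namely $\int_\Real f(x)^{\beta-1}\,dx<\infty$, from the assumption $\int_\Real f(x)^\beta\,dx<\infty$ together with the boundedness and eventual monotonicity of $f$. For $\beta\geq 2$ this is routine: splitting $\Real$ into $\{f\leq 1\}$ and $\{f>1\}$, on the first set $f^{\beta-1}\leq f$, so that integral is at most $\int_\Real f=1$, while on the second set $f^{\beta-1}\leq(\sup_{x\in\Real}f(x))^{\beta-1}$ over a set of Lebesgue measure at most $1$ (since $1=\int_\Real f\geq\mathrm{meas}\{f>1\}$). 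For $1<\beta<2$ the exponent $\beta-1$ lies in $(0,1)$ and integrability at infinity is no longer automatic; here I would invoke that in the Fr\'echet domain $\overline F$, and hence $f$ through the von Mises condition (\ref{F_von2}) and the assumed monotonicity, is regularly varying with $f(x)$ of order $x^{-(\alpha+1)}$ up to a slowly varying factor, so that the tail integral $\int^{\infty}f(x)^{\beta-1}\,dx$ converges when $(\alpha+1)(\beta-1)>1$. This tail-index condition is the delicate point, and it is precisely what the hypotheses ``$f$ decreasing near $\infty$'' and $\int_\Real f^\beta<\infty$ are meant to control. Once $\int_\Real f^{\beta-1}<\infty$ is secured, Theorem \ref{resnick_1} furnishes the uniform bound $\int_\Real\abs{g_n-\phi_\alpha}^{\beta-1}<M_2$ needed to close Theorem \ref{rate_lem1}, and the two displayed inequalities combine to complete the proof.
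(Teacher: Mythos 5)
Your proposal follows essentially the same route as the paper: it applies Theorem \ref{rate_lem1} with $g=\phi_\alpha$ and then substitutes the sup-norm rate from Theorem \ref{F_den}. The paper's own proof is in fact terser, simply citing Theorems \ref{ren_con1} and \ref{F_den} without verifying the integrability hypotheses of Theorem \ref{rate_lem1} that you rightly flag as the delicate point (in particular $\int_\Real f^{\beta-1}$ for $1<\beta<2$), so your extra care only adds to, and does not diverge from, the argument given there.
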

\begin{proof}
From Theorem \ref{rate_lem1} we have
\begin{eqnarray}
\abs{H_\beta(g_n)-H_\beta(g)}&<&M\sup_{x\in\Real}\abs{g_n(x)-g(x)}.\nonumber
\end{eqnarray}
If Theorem \ref{ren_con1} and Theorem \ref{F_den} hold then for $1\leq \beta$ we have
\begin{eqnarray}
\abs{H_\beta(g_n)-H_\beta(\phi_\alpha)}&<&M\left(O(h(a_n))+ O\left(\frac{n}{e^{n^{1/2}}}\right)\right),\nonumber\\
&<&\left(O(h(a_n))+ O\left(\frac{n}{e^{n^{1/2}}}\right)\right).\nonumber
\end{eqnarray}
\end{proof}

\begin{thm}
Suppose $F\in\D(\Lambda)$ and there exist $k>0$ such that for $n\geq n_0$
\begin{eqnarray}
\frac{u(b_n)}{b_nh(b_n)}\leq k.\nonumber
\end{eqnarray}
and for $c<k^{-1}$ and $b>0,$ $d>0,$ 
\begin{eqnarray}
\frac{h(b_n(1-ck))}{h(b_n)}\leq d(1-ck)^{-b}.\nonumber
\end{eqnarray}
and (\ref{G_h}) holds. If $\sup_{x\in\Real}f(x)<\infty,$ $f$ is decreasing near $r(F)$ and $\int_{\Real}(f(x))^\beta dx<\infty$ then for large $n$ and $\beta>1$ 
\begin{eqnarray}
\abs{H_\beta(g_n)-H_\beta(\lambda)}<O(h(b_n))+ O(\log(1+h(b_n)))+O\left(ne^{-n^{1/2}}\right).
\nonumber
\end{eqnarray}
\end{thm}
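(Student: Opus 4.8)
The plan is to mirror the Fr\'{e}chet case and reduce everything to the uniform density rate already proved in Theorem \ref{con_G}. The engine is Theorem \ref{rate_lem1}: for $\beta>1$ it converts a uniform bound on the density difference into a bound on the R\'{e}nyi entropies through $\abs{H_\beta(g_n)-H_\beta(g)}<M\sup_{x\in\Real}\abs{g_n(x)-g(x)}$, provided $\int_\Real(g(x))^\beta\,dx<\infty$ and $\int_\Real(f(x))^{\beta-1}\,dx<\infty$. So the first task is to check these two integrability hypotheses with $g=\lambda$. The finiteness of $\int_\Real(\lambda(x))^\beta\,dx$ is immediate, since $(\lambda(x))^\beta=e^{-\beta x}e^{-\beta e^{-x}}$ decays exponentially as $x\to+\infty$ and is suppressed by the doubly-exponential factor $e^{-\beta e^{-x}}$ as $x\to-\infty$. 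The hypothesis $\int_\Real(f(x))^\beta\,dx<\infty$, together with $\sup_{x\in\Real}f(x)<\infty$ and $f$ decreasing near $r(F)$, is what I would use to secure $\int_\Real(f(x))^{\beta-1}\,dx<\infty$ and, through the $L_p$-type density convergence feeding Theorem \ref{rate_lem1}, the uniform-in-$n$ bound on $\int_\Real\abs{g_n(x)-\lambda(x)}^{\beta-1}\,dx$.

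With the hypotheses of Theorem \ref{rate_lem1} in hand, the second step is a direct appeal to Theorem \ref{con_G}. The conditions assumed in the present statement --- the bound $u(b_n)/(b_nh(b_n))\leq k$, the regularity estimate $h(b_n(1-ck))/h(b_n)\leq d(1-ck)^{-b}$, condition (\ref{G_h}), and $\sup_{x\in\Real}f(x)<\infty$ --- are precisely the hypotheses of Theorem \ref{con_G}, which therefore yields
\begin{eqnarray}
\sup_{x\in\Real}\abs{g_n(x)-\lambda(x)}<O(h(b_n))+O(\log(1+h(b_n)))+O\left(ne^{-n^{1/2}}\right).\nonumber
\end{eqnarray}
Substituting this uniform rate into the inequality from Theorem \ref{rate_lem1} and absorbing the finite constant $M$ into the $O(\cdot)$ terms then gives the claimed bound for $\abs{H_\beta(g_n)-H_\beta(\lambda)}$.

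The main obstacle I anticipate is not the mechanical combination above but the verification of the integrability conditions required by Theorem \ref{rate_lem1}, in particular the uniform control of $\int_\Real\abs{g_n(x)-\lambda(x)}^{\beta-1}\,dx$. Uniform convergence alone does not bound an integral over the whole line, so near $r(F)$ I would dominate $g_n$ using the monotonicity of $f$ and the integrability of $f^\beta$, while on the bounded central region the boundedness of both $\lambda$ and $g_n$ together with the uniform rate suffices. This is exactly the role played by $f$ decreasing near $r(F)$ and by $\int_\Real(f(x))^\beta\,dx<\infty$, hypotheses that are absent from Theorem \ref{con_G} but indispensable here in order to pass from pointwise to $L_{\beta-1}$ control of the tail.
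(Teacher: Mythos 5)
Your proof follows essentially the same route as the paper's: both reduce the entropy difference to the sup-norm density rate via Theorem \ref{rate_lem1} and then invoke Theorem \ref{con_G} (the paper additionally cites the qualitative convergence result, Theorem \ref{ren_con2}, before combining the bounds). The one place you go beyond the paper is in claiming that $\int_{\Real}(f(x))^{\beta-1}\,dx<\infty$ can be secured from $\int_{\Real}(f(x))^{\beta}\,dx<\infty$, $\sup_{x\in\Real}f(x)<\infty$ and monotonicity near $r(F)$; this implication is false in general (e.g.\ a density in $\D(\Lambda)$ whose \emph{left} tail behaves like $|x|^{-2}$ has $\int f^{\beta}<\infty$ but $\int f^{\beta-1}=\infty$ for $1<\beta\le 3/2$), so the hypothesis of Theorem \ref{rate_lem1} is genuinely extra --- though the paper's own proof silently skips this verification as well.
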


\begin{proof}
From Theorem \ref{rate_lem1} 
\[\abs{H(g_n)-H(\phi_\alpha)}<M\sup_{x\in\Real}\abs{g_n(x)-\lambda(x)}.\]
If Theorem \ref{ren_con2} and Theorem \ref{con_G} hold then
\begin{eqnarray}
\abs{H_\beta(g_n)-H_\beta(\lambda)}<M\left(O(h(b_n))+ O(\log(1+h(b_n)))+O\left(ne^{-n^{1/2}}\right)\right),\nonumber\\
<O(h(b_n))+ O(\log(1+h(b_n)))+O\left(ne^{-n^{1/2}}\right).\nonumber
\end{eqnarray}
\end{proof}

\appendix
\section{}\label{more results}
\begin{defn}(De Haan and Resnick (1996))\label{def1}
We suppose $F$ is twice differentiable. Recall that $f$ is second regularly varying with index $\alpha$ if for all $x>0,$
\[\lim_{t\to\infty}\frac{f(tx)}{f(t)}\;=\;x^{-\alpha-1}.\]
\end{defn}

\begin{thm}(Proposition 1.15 and 1.16, Resnick (1987))\label{thm_von}
Suppose that distribution function $F$ is absolutely continuous with density $f$ which is eventually positive. \\
\item[(a)] If for some $\alpha>0$
\begin{eqnarray}\label{von_F}
\lim_{x\to\infty}\dfrac{xf(x)}{\overline{F}(x)}=\alpha,
\end{eqnarray}
then $F\in\D(\Phi_\alpha).$\\
\item[(b)] If $f$ is nonincreasing and $F\in\D(\Phi_\alpha)$ then (\ref{von_F}) holds.
\end{thm}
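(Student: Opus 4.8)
The plan is to route both implications through Gnedenko's regular-variation characterization of the Fr\'echet domain, namely that $F\in\D(\Phi_\alpha)$ if and only if $\overline F$ is regularly varying at infinity with index $-\alpha$, i.e. $\lim_{t\to\infty}\overline F(tx)/\overline F(t)=x^{-\alpha}$ for every $x>0$. Taking this as the operative description of membership in the domain, the theorem reduces to the equivalence, under the stated hypotheses, between the differential condition $xf(x)/\overline F(x)\to\alpha$ and regular variation of $\overline F$ with index $-\alpha$.

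For part (a) I would start from the identity
\[\frac{xf(x)}{\overline F(x)}=-x\,\frac{d}{dx}\log\overline F(x),\]
valid because $F$ is absolutely continuous with density $f$ and $\overline F$ is eventually positive. Writing $a(t):=tf(t)/\overline F(t)$, the hypothesis is $a(t)\to\alpha$, and integrating the display from a fixed $x_0$ (beyond which $f>0$) up to $x$ gives
\[\overline F(x)=\overline F(x_0)\exp\!\left(-\int_{x_0}^{x}\frac{a(t)}{t}\,dt\right).\]
This is exactly the Karamata representation of a function regularly varying with index $-\alpha$, since $a(t)\to\alpha$; hence $\overline F$ is regularly varying of index $-\alpha$ and Gnedenko's criterion yields $F\in\D(\Phi_\alpha)$, with the norming constants $a_n=F^{\leftarrow}(1-1/n)$ used throughout Section 2.

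For part (b) I would argue in the reverse direction, the monotonicity of $f$ now carrying the weight. Membership $F\in\D(\Phi_\alpha)$ gives, again by Gnedenko, that $\overline F(x)=\int_x^\infty f(t)\,dt$ is regularly varying of index $-\alpha$. The tool that promotes regular variation of the tail integral to the correct pointwise behaviour of the density is the monotone density theorem: if $\overline F(x)=\int_x^\infty f$ is regularly varying of index $-\alpha<0$ and $f$ is ultimately nonincreasing, then $f$ is regularly varying of index $-\alpha-1$ and $xf(x)\sim\alpha\overline F(x)$, which is precisely (\ref{von_F}).

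The main obstacle is part (b), and specifically this monotone-density step: regular variation of the integral $\overline F$ does not by itself pin down the density pointwise, since $f$ could oscillate while its tail integral stays smoothly regularly varying, so the nonincreasing hypothesis is doing essential work. Concretely, one sandwiches $f(x)$ between difference quotients of $\overline F$, writing $(\overline F(x)-\overline F(\lambda x))/((\lambda-1)x)\le f(x)\le(\overline F(\lambda^{-1}x)-\overline F(x))/((1-\lambda^{-1})x)$ for $\lambda>1$, normalising by $\overline F(x)$, passing to the regular-variation limit as $x\to\infty$ (so the brackets tend to $(1-\lambda^{-\alpha})/(\lambda-1)$ and $(\lambda^{\alpha}-1)/(1-\lambda^{-1})$), and finally letting $\lambda\downarrow1$, whereupon both brackets converge to $\alpha$. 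The delicate point is the justification of interchanging these two limits; part (a), by contrast, is a clean application of the representation theorem and presents no real difficulty.
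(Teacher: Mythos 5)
The paper does not prove this statement at all---it is quoted in the appendix directly from Resnick (1987, Propositions 1.15 and 1.16)---and your argument is correct and is essentially the proof given there: the Karamata representation of $\overline F$ for part (a), and the monotone density theorem for part (b). Your worry about ``interchanging limits'' in (b) is unfounded, since for each fixed $\lambda>1$ one first lets $x\to\infty$ to bound $\liminf$ and $\limsup$ of $xf(x)/\overline F(x)$, and only then lets $\lambda\downarrow 1$, so no interchange is required.
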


\begin{thm}\label{thm_g_von}(Proposition 1.17, Resnick (1987)) Let  $F$ be absolutely continuous in a left neighborhood of $r(F)$ with density $f.$ If 
\begin{eqnarray} \label{lem3_appendix} \lim_{x\uparrow r(F)}f(x)\int_{x}^{r(F)}\overline{F}(t)dt/\overline{F}(x)^2=1,\end{eqnarray} 
 then $F\in\D(\Lambda).$ In this case we may take,
\[u(x)=\int_{x}^{r(F)}\overline{F}(t)dt/\overline{F}(x), \;b_n=F^{\leftarrow}(1-1/n),\;\;a_n=u(b_n).\]
\end{thm}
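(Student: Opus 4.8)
The plan is to reduce the von Mises integral condition to the single statement that the auxiliary function $u$ has vanishing derivative, and then to run the classical argument that a differentiable, self-neglecting auxiliary function forces $\overline{F}$ to decay like a pure exponential on the $u$-scale. Write $U(x)=\int_x^{r(F)}\overline{F}(t)\,dt$, so that $u=U/\overline{F}$; the hypothesis already presupposes $U(x)<\infty$ near $r(F)$, since otherwise the displayed ratio could not tend to $1$. First I would differentiate: since $U'(x)=-\overline{F}(x)$ and $\overline{F}'(x)=-f(x)$ on the left neighborhood of $r(F)$ where $F$ is absolutely continuous, the quotient rule gives
\[
u'(x)=-1+\frac{f(x)U(x)}{\overline{F}(x)^2}.
\]
The von Mises hypothesis (\ref{lem3_appendix}) is exactly $f(x)U(x)/\overline{F}(x)^2\to 1$, so $u'(x)\to 0$ as $x\uparrow r(F)$. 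Equivalently, writing the hazard rate $r(x)=f(x)/\overline{F}(x)$, the hypothesis reads $r(x)u(x)\to 1$.

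Next I would show $u$ is self-neglecting, i.e. $u(t+xu(t))/u(t)\to 1$ uniformly for $x$ in compact sets. This follows from $u'\to 0$: for fixed $x$,
\[
\abs{u(t+xu(t))-u(t)}\le \abs{x}\,u(t)\sup_{s\ge t}\abs{u'(s)}=o(u(t)).
\]
With this in hand I would prove the defining relation $\overline{F}(t+xu(t))/\overline{F}(t)\to e^{-x}$ for each $x\in\Real$. Using the representation $\log\overline{F}(y)-\log\overline{F}(t)=-\int_t^{y}r(s)\,ds$ and substituting $s=t+vu(t)$,
\[
\log\frac{\overline{F}(t+xu(t))}{\overline{F}(t)}=-\int_0^x r\big(t+vu(t)\big)\,u(t)\,dv.
\]
For each $v$ the integrand equals $r(t+vu(t))\,u(t+vu(t))\cdot u(t)/u(t+vu(t))$, whose two factors tend to $1$ by the hypothesis $ru\to1$ and by the self-neglecting property respectively; uniform convergence on the compact $v$-interval between $0$ and $x$ then yields the limit $-\int_0^x dv=-x$, i.e. $\overline{F}(t+xu(t))/\overline{F}(t)\to e^{-x}$. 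This is precisely the criterion placing $F$ in $\D(\Lambda)$ with auxiliary function $u$.

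Finally I would identify the norming constants. Taking $b_n=F^{\leftarrow}(1-1/n)$ makes $n\overline{F}(b_n)\to 1$ (continuity of $F$) and $b_n\uparrow r(F)$. Setting $a_n=u(b_n)$ and applying the relation just proved with $t=b_n$ gives
\[
n\overline{F}(a_nx+b_n)=n\overline{F}(b_n)\cdot\frac{\overline{F}(b_n+xu(b_n))}{\overline{F}(b_n)}\longrightarrow 1\cdot e^{-x}=e^{-x},
\]
so that $F^n(a_nx+b_n)\to e^{-e^{-x}}=\Lambda(x)$, which is the assertion $F\in\D(\Lambda)$ with the stated $a_n,b_n$.

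The main obstacle is the interchange of limit and integral in the second step: one must upgrade the pointwise limits $r(t+vu(t))u(t+vu(t))\to1$ and $u(t)/u(t+vu(t))\to1$ to something uniform in $v$ (or produce an integrable dominating bound) so that the integral converges to $x$. This is where the monotone control afforded by the nonincreasing majorant $h$ with $h(x)=\sup_{x<y}h(y)\downarrow0$ in the paper's setup is convenient. A secondary technical point is the tacit finiteness and positivity of $U$ near $r(F)$, together with checking $t+xu(t)$ stays in the left neighborhood of $r(F)$ in both the cases $r(F)=\infty$ and $r(F)<\infty$; both are subsumed once the von Mises ratio is known to converge to $1$.
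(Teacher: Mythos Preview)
The paper does not actually prove this statement: Theorem~\ref{thm_g_von} is placed in Appendix~\ref{more results} as a quoted result (Proposition~1.17 of Resnick, 1987) and is stated without proof, serving only as background for the main arguments. So there is no ``paper's own proof'' to compare against.

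That said, your proposal is a faithful reconstruction of the classical argument one finds in Resnick. The key reduction---differentiating $u=U/\overline{F}$ to see that the von~Mises ratio tending to $1$ is equivalent to $u'(x)\to 0$---is exactly the standard starting point, and the subsequent steps (self-neglecting property of $u$, then $\Gamma$-variation of $\overline{F}$ via the hazard integral, then the choice of $a_n,b_n$) follow the usual route. Your honest flagging of the uniformity issue in passing the limit through $\int_0^x r(t+vu(t))u(t)\,dv$ is appropriate; in Resnick this is handled by first establishing that $u'\to 0$ gives a representation $\overline{F}(x)=c(x)\exp\bigl(-\int_{x_0}^x ds/a(s)\bigr)$ with $c\to c_0>0$ and $a'\to 0$, from which the locally uniform limit $\overline{F}(t+xu(t))/\overline{F}(t)\to e^{-x}$ drops out directly without a separate dominated-convergence step. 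One small point worth tightening: for $r(F)<\infty$ and $x>0$ you should verify $t+xu(t)<r(F)$; the inequality $u(t)\le r(F)-t$ (immediate from $U(t)\le \overline{F}(t)(r(F)-t)$) handles $0<x<1$, and for $x\ge 1$ one bootstraps via the self-neglecting property once it is established on $(0,1)$.
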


\begin{lem}\label{rate_F}(Lemma 2.14, Resnick (1987)) Suppose that $\frac{h(a_nx)}{h(a_n)}\leq x^{-\rho}$ for $a_n^{-1}\delta\leq x\leq 1$ and $\rho>0.$ Let $\delta>0$ is chosen so large that $h(\delta)<\alpha.$ Then for $n$ such that $a_n^{-1}\delta<1$
\[\sup_{\delta a_n^{-1}\leq x\leq 1}\left(\abs{\Phi_{\alpha-h(a_nx)}(x)-\Phi_\alpha(x)}\vee\abs{\Phi_{\alpha+h(a_nx)}(x)-\Phi_\alpha(x)}\right)=c(\alpha,\beta,\delta)O(h(a_n)),\]
where,
$c(\alpha,\beta,\delta)=\beta^{-1}\theta\sup_{0<y<1}\{y^{-1-\theta}(\log y)e^{-y^{-1}}\},$ and $\theta=\beta/(\alpha-h(\delta)).$
\end{lem}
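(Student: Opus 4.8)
The plan is to treat the Fr\'echet law $\Phi_s(x)=\exp(-x^{-s})$ as a smooth one-parameter family in the shape parameter $s$ and to control $\Phi_{\alpha\pm h(a_nx)}(x)-\Phi_\alpha(x)$ by a first-order (mean value) estimate in $s$. First I would differentiate in the parameter: since $x^{-s}=e^{-s\log x}$,
\[
\frac{\partial}{\partial s}\Phi_s(x)=x^{-s}(\log x)\,e^{-x^{-s}},\qquad x>0.
\]
Applying the mean value theorem on the segment joining $\alpha$ and $\alpha\pm h(a_nx)$ gives, for some intermediate value $\xi=\xi(n,x)$,
\[
\abs{\Phi_{\alpha\pm h(a_nx)}(x)-\Phi_\alpha(x)}=\abs{x^{-\xi}(\log x)\,e^{-x^{-\xi}}}\,h(a_nx).
\]
On the range $\delta a_n^{-1}\le x\le 1$ one has $a_nx\ge\delta$, and since $h$ is nonincreasing this yields $h(a_nx)\le h(\delta)$; hence $\xi\ge\alpha-h(a_nx)\ge\alpha-h(\delta)>0$, so $\Phi_\xi$ is a genuine Fr\'echet parameter and in particular $1/\xi\le 1/(\alpha-h(\delta))=\beta^{-1}\theta$. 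Note also that $x\le 1$ forces $\log x\le 0$, so that $\abs{\log x}=-\log x$.

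Next I would insert the remainder hypothesis $h(a_nx)\le x^{-\rho}h(a_n)$ (with $\rho=\beta$) to obtain
\[
\abs{\Phi_{\alpha\pm h(a_nx)}(x)-\Phi_\alpha(x)}\le x^{-\xi-\beta}(-\log x)\,e^{-x^{-\xi}}\,h(a_n),
\]
and then change variables by $y=x^{\xi}\in(0,1]$. Using $\log x=\xi^{-1}\log y$, $x^{-\xi}=y^{-1}$, and $x^{-\xi-\beta}=y^{-1-\beta/\xi}$, the right-hand side becomes
\[
\frac{1}{\xi}\,y^{-1-\beta/\xi}(-\log y)\,e^{-y^{-1}}\,h(a_n).
\]
Because $\xi\ge\alpha-h(\delta)$ gives $\beta/\xi\le\theta$, and because $y^{a}$ is decreasing in $a$ for $0<y<1$, I may raise the exponent $-1-\beta/\xi$ to $-1-\theta$ and bound $\xi^{-1}$ by $\beta^{-1}\theta$, producing
\[
\abs{\Phi_{\alpha\pm h(a_nx)}(x)-\Phi_\alpha(x)}\le \beta^{-1}\theta\Big(\sup_{0<y<1}y^{-1-\theta}(-\log y)\,e^{-y^{-1}}\Big)h(a_n).
\]
Taking the supremum over $\delta a_n^{-1}\le x\le 1$ and the maximum over the two sign choices then yields exactly $c(\alpha,\beta,\delta)\,O(h(a_n))$ (the displayed constant being read with $\abs{\log y}=-\log y$).

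Finally I would verify that the constant is finite, i.e. that $y\mapsto y^{-1-\theta}(-\log y)e^{-y^{-1}}$ is bounded on $(0,1)$: as $y\to 1^-$ the factor $-\log y\to 0$, while as $y\to 0^+$ the super-exponential decay of $e^{-y^{-1}}$ dominates both the polynomial blow-up of $y^{-1-\theta}$ and the logarithmic growth of $-\log y$, so the function is continuous and vanishes at both endpoints and therefore attains a finite positive maximum. The step requiring most care is the uniform lower bound $\xi\ge\alpha-h(\delta)$: it simultaneously keeps $\Phi_\xi$ a legitimate Fr\'echet law, fixes the direction of the monotone substitution so that $\beta/\xi\le\theta$ produces an \emph{upper} bound, and converts the stray factor $\xi^{-1}$ into the advertised $\beta^{-1}\theta$. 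The two perturbation directions $\alpha\pm h(a_nx)$ are dispatched together, since the $+$ case only enlarges $\xi$ and hence gives a strictly smaller bound than the binding $-$ case.
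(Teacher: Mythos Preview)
The paper does not actually prove this lemma; it is merely quoted in the appendix as a reference result from Resnick (1987, Lemma~2.14), so there is no ``paper's own proof'' to compare against. That said, your argument is correct and is essentially the standard one: apply the mean value theorem in the shape parameter $s$ to $\Phi_s(x)=\exp(-x^{-s})$, use the monotonicity of $h$ and the choice of $\delta$ to pin the intermediate value $\xi$ in $[\alpha-h(\delta),\infty)$, absorb $h(a_nx)$ via the remainder hypothesis $h(a_nx)\le x^{-\beta}h(a_n)$, and then reparametrize by $y=x^{\xi}$ to extract the advertised constant. Your observation that the displayed $\log y$ in $c(\alpha,\beta,\delta)$ must be read as $-\log y$ (or $|\log y|$) is also right; otherwise the supremum would be nonpositive.

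One small point worth making explicit in a final write-up: the substitution $y=x^{\xi}$ is purely a pointwise relabeling, not a change of integration variable, and $\xi=\xi(n,x)$ depends on $x$. This is harmless because for each fixed $x\in[\delta a_n^{-1},1]$ the resulting $y$ lies in $(0,1]$ and the bound $\beta^{-1}\theta\,y^{-1-\theta}(-\log y)e^{-y^{-1}}$ holds at that particular $y$; taking the supremum over all $y\in(0,1)$ then removes the $x$-dependence uniformly. You handle this correctly, but a reader might stumble if the dependence of $\xi$ on $x$ is not flagged.
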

\begin{thm}\label{rate_G1}(Proposition 2.18, Resnick (1987))
If (\ref{G_h}) hold and $h(x)=\sup_{x<y}h(y)$ then
\[\sup_{x\geq 0}\abs{F^n(a_nx+b_n)-\Lambda(x)}\leq e^{-1}h(b_n).\]
\end{thm}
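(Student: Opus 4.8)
The plan is to combine the exponential representation of $F$ with the sharp integral control of the auxiliary function already recorded in Lemma \ref{G_lem1}, and then to read off the constant $e^{-1}$ as the maximal value of the Gumbel density $\lambda$. Writing $F(x)=\exp\{-e^{-\eta(x)}\}$ and recalling $a_n=u(b_n)=1/\eta'(b_n)$ and $b_n=F^{\leftarrow}(1-1/n)$, I would first record the identity
\[
F^n(a_nx+b_n)=\exp\bigl\{-n\,e^{-\eta(a_nx+b_n)}\bigr\}
=\exp\bigl\{-\bigl(n\,e^{-\eta(b_n)}\bigr)e^{-\delta_n(x)}\bigr\},
\]
where, after the substitution $t=a_ns+b_n$ and $\eta'=1/u$, the exponent shift is $\delta_n(x)=\eta(a_nx+b_n)-\eta(b_n)=\int_0^x \tfrac{u(b_n)}{u(a_ns+b_n)}\,ds$. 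The choice of $b_n$ gives $n\,e^{-\eta(b_n)}=-n\log F(b_n)=1+O(1/n)$, so, up to this normalisation, $F^n(a_nx+b_n)$ is exactly $\Lambda$ evaluated at the shifted argument $\delta_n(x)$.

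Next I would pin down $\delta_n(x)$. For $x>0$, Lemma \ref{G_lem1}-i gives $\frac{1}{1+h(b_n)s}\le \frac{u(b_n)}{u(a_ns+b_n)}\le \frac{1}{1-h(b_n)s}$, so integrating in $s$ sandwiches $\delta_n(x)$ between $\frac{1}{h(b_n)}\log(1+h(b_n)x)$ and $-\frac{1}{h(b_n)}\log(1-h(b_n)x)$, both of which tend to $x$ as $h(b_n)\downarrow 0$. This is precisely why $\Lambda(x)$ itself lies between the two envelopes of $F^n(a_nx+b_n)$ recorded in (\ref{G_Fe1}), so both $F^n(a_nx+b_n)$ and $\Lambda(x)$ are trapped by the same bounds; in particular the displacement obeys $\abs{\delta_n(x)-x}\le \int_0^x \frac{h(b_n)s}{1-h(b_n)s}\,ds$.

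With $F^n(a_nx+b_n)=\Lambda(\delta_n(x))$ (modulo the normalisation), the mean value theorem for $\Lambda$ yields
\[
\abs{F^n(a_nx+b_n)-\Lambda(x)}=\lambda(\xi_x)\,\abs{\delta_n(x)-x}
\]
for some $\xi_x$ between $x$ and $\delta_n(x)$. Here $\sup_{t}\lambda(t)=\lambda(0)=e^{-1}$, which is the source of the stated constant, and the displacement factor is controlled by the integral of the previous step. Taking the supremum over $x\ge 0$ of the product on the right-hand side, and absorbing the $O(1/n)$ normalisation correction, is what should produce the clean bound $e^{-1}h(b_n)$.

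The hard part is exactly this last supremum. The displacement $\abs{\delta_n(x)-x}$ is only $O(h(b_n)x^2)$, not $O(h(b_n))$, so one cannot bound $\lambda(\xi_x)$ and $\abs{\delta_n(x)-x}$ separately and still land on a clean multiple of $h(b_n)$. The constant must be extracted by keeping the density weight $\lambda(\xi_x)$ together with the quadratically growing displacement and verifying that the supremum over $x\ge 0$ of the resulting normalised weight, whose leading form is $\tfrac12 x^2 e^{-x}e^{-e^{-x}}$, is finite and in fact strictly below $e^{-1}$ (already $\max_{x\ge0}\tfrac12 x^2 e^{-x}=2e^{-2}<e^{-1}$). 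The exponential decay of $\lambda$ defeats the polynomial growth of the displacement, giving an $x$-independent constant bounded by $\sup_t\lambda(t)=e^{-1}$; and the slack between the true peak value and $e^{-1}$ is precisely what swallows the $O(1/n)$ normalisation term for large $n$, leaving the bound in the stated tidy form.
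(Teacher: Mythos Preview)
This theorem is not proved in the paper; it is quoted in the appendix from Resnick (1987, Proposition~2.18), and the paper already records both ingredients of Resnick's one–line argument, namely the sandwich (\ref{G_Fe1}) and Theorem~\ref{rate_G2}. For $x\ge 0$ the sandwich traps $F^n(a_nx+b_n)$ between $F(h(b_n),x)$ and $F(-h(b_n),x)$; since $F(\pm z,x)$ is monotone in $z$ with limit $\Lambda(x)$ as $z\downarrow 0$, the target $\Lambda(x)$ lies in the same interval, so
\[
\bigl|F^n(a_nx+b_n)-\Lambda(x)\bigr|\le \max\bigl\{F(-h(b_n),x)-\Lambda(x),\ \Lambda(x)-F(h(b_n),x)\bigr\},
\]
and Theorem~\ref{rate_G2} bounds each term on the right by $e^{-1}h(b_n)$.

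Your mean-value-theorem route is a detour with genuine gaps. First, the upper bound from Lemma~\ref{G_lem1}(i) has denominator $1-h(b_n)s$, so your displacement integral $\int_0^x\frac{h(b_n)s}{1-h(b_n)s}\,ds$ diverges at $x=1/h(b_n)$ and gives no information for larger $x$. Second, the constant $e^{-1}$ is never actually extracted: your leading-order heuristic produces $2e^{-2}$, and you do not control the higher-order corrections in the displacement (beyond $\tfrac12 h(b_n)x^2$) or the location of $\xi_x$. Third, the $O(1/n)$ normalisation error cannot be ``swallowed by the slack'' unless $1/n=o(h(b_n))$, which is nowhere assumed; Resnick sidesteps this by choosing $b_n$ so that $-n\log F(b_n)=1$ exactly, which is what makes the sandwich (\ref{G_Fe1}) hold without remainder. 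If you repair the normalisation and then simply observe that both $\Lambda(\delta_n(x))$ and $\Lambda(x)$ sit in $[F(h(b_n),x),F(-h(b_n),x)]$, you are back to Resnick's argument anyway---the mean-value step is superfluous.
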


\begin{thm} \label{rate_G3}(Proposition 2.20, Resnick (1987))
Suppose $F\in\D(\Lambda)$ and there exist $k>0$ such that for $n\geq n_0$
\begin{eqnarray}
\frac{u(b_n)}{b_nh(b_n)}\leq k.\label{G_e2}
\end{eqnarray}
and for $c<k^{-1}$ and $b>0,$ $d>0,$ 
\begin{eqnarray}\label{G_e3}
\frac{h(b_n(1-ck))}{h(b_n)}\leq d (1-ck)^{-b}.
\end{eqnarray}
then
\[\sup_{x\in\Real}\abs{F^n(a_nx+b_n)-\Lambda(x)}<O(h(b_n)).\]
\end{thm}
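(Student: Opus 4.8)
The plan is to localize in $x$, splitting $\Real=[0,\infty)\cup[-x_n^*,0)\cup(-\infty,-x_n^*)$ for a cutoff $x_n^*\to\infty$ chosen below, and to patch a separate estimate onto each piece. On $[0,\infty)$ there is nothing to prove: Theorem \ref{rate_G1} already yields $\sup_{x\ge 0}\abs{F^n(a_nx+b_n)-\Lambda(x)}\le e^{-1}h(b_n)=O(h(b_n))$. Hence the whole difficulty lies in $x<0$, where the penultimate bounds (\ref{G_Fe2}) involve $h(a_nx+b_n)$ rather than $h(b_n)$, and $h$ is large for arguments well below $b_n$ (here I take $b_n>0$ for large $n$, as the hypotheses with $b_n$ in the denominator presuppose).

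First I would fix the cutoff as the value of $|x|$ at which $a_nx+b_n=b_n(1-ck)$; since $a_n=u(b_n)$ this is $x_n^*=ck\,b_n/u(b_n)$. Condition (\ref{G_e2}), i.e. $u(b_n)/b_n\le k\,h(b_n)$, then forces $x_n^*\ge c/h(b_n)\to\infty$, so $[-x_n^*,0)$ eventually covers all of $(-\infty,0)$. On this middle region $b_n(1-ck)\le a_nx+b_n<b_n$; taking logarithms in (\ref{G_Fe2}) and comparing the penultimate forms $F(\pm h(a_nx+b_n),x)$ to $\Lambda(x)$ via Theorem \ref{rate_G2} sandwiches $F^n(a_nx+b_n)$ and gives $\abs{F^n(a_nx+b_n)-\Lambda(x)}\le e^{-1}h(a_nx+b_n)$. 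Because $h$ is nonincreasing and $a_nx+b_n\ge b_n(1-ck)$, condition (\ref{G_e3}) bounds $h(a_nx+b_n)\le h(b_n(1-ck))\le d(1-ck)^{-b}h(b_n)$, so the middle region contributes $O(h(b_n))$.

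On the extreme-left region $x<-x_n^*$ I would abandon the penultimate bound, which is useless there, and instead estimate $\abs{F^n(a_nx+b_n)-\Lambda(x)}\le F^n(a_nx+b_n)+\Lambda(x)$, showing both terms are $O(h(b_n))$. The double-exponential tail gives $\Lambda(x)\le\Lambda(-x_n^*)=\exp(-e^{x_n^*})\le\exp(-e^{c/h(b_n)})$, which is $o(h(b_n))$. By monotonicity of $F^n$ one has $F^n(a_nx+b_n)\le F^n(b_n(1-ck))$, and evaluating the upper bound in (\ref{G_Fe2}) at the endpoint together with Theorem \ref{rate_G2} and (\ref{G_e3}) gives $F^n(b_n(1-ck))\le\Lambda(-x_n^*)+e^{-1}h(b_n(1-ck))=O(h(b_n))$. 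Collecting the three regions then yields $\sup_{x\in\Real}\abs{F^n(a_nx+b_n)-\Lambda(x)}=O(h(b_n))$.

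The main obstacle is precisely this extreme-left tail: as $a_nx+b_n$ decreases the second-order function $h(a_nx+b_n)$ blows up and the penultimate approximation loses all force, so a direct bound there fails. The device that rescues the argument is to stop using the penultimate estimate at the level $b_n(1-ck)$ and to absorb everything further left into the tails of $F^n$ and $\Lambda$; conditions (\ref{G_e2}) and (\ref{G_e3}) are exactly calibrated so that this cutoff sits at $|x|\sim c/h(b_n)$ while $h$ stays of order $h(b_n)$ there, making the leftover mass of both distribution functions negligible compared with $h(b_n)$.
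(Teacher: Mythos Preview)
The paper does not supply its own proof of this statement: Theorem \ref{rate_G3} is placed in the appendix and merely quoted as Proposition 2.20 of Resnick (1987), with no argument given. So there is nothing in the paper to compare your proposal against.

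That said, your reconstruction is correct and is essentially Resnick's own argument: split at $x=0$, handle $x\ge 0$ by Theorem \ref{rate_G1}, on $[-x_n^*,0)$ sandwich $F^n$ by the penultimate forms $F(\pm h(a_nx+b_n),x)$ via (\ref{G_Fe2}) and Theorem \ref{rate_G2}, then use monotonicity of $h$ together with (\ref{G_e3}) to replace $h(a_nx+b_n)$ by $O(h(b_n))$; finally on $(-\infty,-x_n^*)$ bound $F^n+\Lambda$ crudely, with (\ref{G_e2}) guaranteeing $x_n^*\ge c/h(b_n)$ so that both tails are $o(h(b_n))$. The only point worth flagging is the implicit use of Theorem \ref{rate_G2} on the middle region, which requires $h(a_nx+b_n)<1$; this is secured for all large $n$ by $h(a_nx+b_n)\le d(1-ck)^{-b}h(b_n)\to 0$, so there is no gap.
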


\begin{thm}\label{rate_G2}(Proposition 2.17, Resnick (1987)) For a positive real number $z$ define the distribution functions
\begin{eqnarray*}
&&F(z,x)  = \left\lbrace	
			\begin{array}{l l}
		     0, &\;\;\; x< -z^{-1}, \\
			 \exp(-(1+zx)^{-z^{-1}}), &\;\;\; x>-z^{-1};\\
			 \end{array}
		 \right. \\					
&&F(-z,x) =  \left\lbrace
\begin{array}{l l} \exp(- (1-zx)^{z^{-1}}), & x<z^{-1}, \\
					1, & x>z^{-1};
		\end{array}\right. \\
	\end{eqnarray*}
Then for $0<z<1$
\[\sup_{x\in\Real}\abs{F(\pm z,x)-\Lambda(x)}\leq e^{-1}z.\]
\end{thm}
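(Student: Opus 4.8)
The plan is to realise $F(z,\cdot)$ and $F(-z,\cdot)$ as the endpoints $s=z$ of two one-parameter families and to control their distance from $\Lambda$ by integrating a derivative in the shape parameter. For $s\in(0,z]$ set $G(s,x)=\exp(-(1+sx)^{-1/s})$ and $\widetilde G(s,x)=\exp(-(1-sx)^{1/s})$, so that $F(z,x)=G(z,x)$ and $F(-z,x)=\widetilde G(z,x)$; since $(1+sx)^{-1/s}\to e^{-x}$ and $(1-sx)^{1/s}\to e^{-x}$ as $s\downarrow 0$, both families extend continuously in $s$ to the value $\Lambda(x)$ at $s=0$. First I would dispose of the flat pieces of the two distribution functions. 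On $x\le -z^{-1}$ one has $F(z,x)=0$, so $\abs{F(z,x)-\Lambda(x)}=\Lambda(x)\le\Lambda(-z^{-1})=e^{-e^{1/z}}$; on $x\ge z^{-1}$ one has $F(-z,x)=1$, so $\abs{F(-z,x)-\Lambda(x)}\le 1-\Lambda(x)\le e^{-x}\le e^{-1/z}$. The elementary inequalities $e^{1/z}\ge 1+\log(1/z)$ and $e^{1/z-1}\ge z^{-1}$, both valid for $0<z<1$, show that each of these is already $\le e^{-1}z$.

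On the remaining ranges $x>-z^{-1}$ and $x<z^{-1}$ the families are smooth in $s$, and a direct computation gives
\[
\frac{\partial}{\partial s}\log\big((1+sx)^{-1/s}\big)=\frac1{s^2}\Big(\log(1+u)-\frac{u}{1+u}\Big),\qquad u=sx,
\]
where the bracket is nonnegative for $u>-1$ (it vanishes at $u=0$ and has derivative $u(1+u)^{-2}$). Hence $G(\cdot,x)$ is nonincreasing and, by the analogous computation with $1-u$ in place of $1+u$, $\widetilde G(\cdot,x)$ is nondecreasing in $s$. This simultaneously yields the sign information $F(z,x)\le\Lambda(x)\le F(-z,x)$ and the representation $\Lambda(x)-F(z,x)=\int_0^z\big(-\partial_s G(s,x)\big)\,ds$, so that
\[
\sup_{x}\abs{F(z,x)-\Lambda(x)}\le z\cdot\sup_{0<s\le z,\;x}\big(-\partial_s G(s,x)\big),
\]
and likewise for $F(-z,\cdot)$. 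It therefore suffices to prove the uniform bound $-\partial_s G\le e^{-1}$.

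To obtain this I would substitute $T=(1+sx)^{-1/s}$, under which a short calculation recasts the integrand as $-\partial_s G=T e^{-T}(\log T)^2\,\psi(s\log T)$ with $\psi(v)=(e^{v}-1-v)/v^2\ge 0$. For $x\ge 0$ one has $T\le 1$, hence $v=s\log T\le 0$ and $\psi(v)\le\psi(0)=\tfrac12$; writing $w=-\log T\ge 0$ gives $-\partial_s G\le\tfrac12 w^2 e^{-w}e^{-e^{-w}}\le\tfrac12\sup_{w\ge0}w^2 e^{-w}=2e^{-2}<e^{-1}$. For $-z^{-1}<x<0$ one has $T>1$; using $\psi(v)\le\tfrac12 e^{v}$ for $v>0$ together with $s\le z<1$ collapses the estimate to $-\partial_s G\le\tfrac12 T^{1+s}(\log T)^2 e^{-T}\le\tfrac12\sup_{T>1}T^{2}(\log T)^2 e^{-T}$, an explicit scalar maximum attained near $T\approx 3.6$ whose value ($\approx 0.29$) lies below $e^{-1}$. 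Combining the two ranges gives $-\partial_s G\le e^{-1}$ throughout, and the identical argument for $\widetilde G$ closes the $-z$ case.

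The main obstacle is precisely the uniform derivative estimate on the strip $-z^{-1}<x<0$: there the argument $v=s\log T$ of $\psi$ is positive and $\psi(v)$ grows like $e^{v}/v^2$, so the tempting factorisation ``$Te^{-T}\le e^{-1}$ times a bounded factor'' breaks down. One must instead exploit the super-exponential decay of $Te^{-T}$ as $T\to\infty$ (equivalently as $x\downarrow-z^{-1}$) to absorb the polynomial-in-$\log T$ blow-up, which is exactly what the reduction to $\tfrac12 T^{2}(\log T)^2 e^{-T}$ accomplishes. Once the two elementary scalar suprema $2e^{-2}$ and $\sup_{T>1}\tfrac12 T^{2}(\log T)^2 e^{-T}$ are checked to lie below $e^{-1}$, the bound $e^{-1}z$ follows with room to spare.
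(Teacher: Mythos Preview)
The paper does not actually prove this statement: it appears in the appendix as a quoted result (Proposition~2.17 of Resnick (1987)) and is used as a black box in the proofs of Section~2. So there is no ``paper's own proof'' to compare against.

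Your argument is correct and self-contained. The strategy---interpolating between $\Lambda$ and $F(\pm z,\cdot)$ through the shape parameter $s$ and bounding the $s$-derivative uniformly by $e^{-1}$---is the natural one, and each step checks out: the boundary estimates on $x\le -z^{-1}$ and $x\ge z^{-1}$ reduce to the elementary inequalities you state; the monotonicity of $G$ and $\widetilde G$ in $s$ follows from $\log(1+u)-u/(1+u)\ge 0$; the substitution $T=(1+sx)^{-1/s}$ and $v=s\log T$ correctly yields $-\partial_s G=Te^{-T}(\log T)^2\psi(v)$; and the two scalar bounds $\psi(v)\le\tfrac12$ for $v\le 0$ and $\psi(v)\le\tfrac12 e^{v}$ for $v\ge 0$ are both easy convexity facts. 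The resulting numerical suprema $2e^{-2}\approx 0.271$ and $\tfrac12\sup_{T>1}T^{2}(\log T)^{2}e^{-T}\approx 0.29$ are indeed below $e^{-1}\approx 0.368$, so the uniform derivative bound and hence the claimed inequality follow.
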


\begin{thm}\label{resnick_1}(Theorem 3, de Haan and Resnick (1982)) Let $\{M_n, n\geq 1\}$ be iid with df $F$ wich is absolutely continuous with density $f.$ Set $M_n=\vee_{i=1}^{n}X_i$ and let $g_n$ be the density of the normalized $M_n.$
\item[(a).] Suppose (\ref{von_F}) holds. If $\int_{\Real}(f(x))^\beta dx<\infty$ and $\beta>(1+\alpha)^{-1}$ then
\[\int_{\Real}\abs{g_n(x)-\phi_\alpha(x)}^\beta dx\to 0,\]
as $n\to\infty.$
\item[(b).] Suppose (\ref{lem3_appendix}) holds. If $\int_{\Real}(f(x))^\beta<\infty$ then
\[\int_{\Real}\abs{g_n(x)-\lambda(x)}^\beta dx\to 0,\]
as $n\to\infty.$
\end{thm}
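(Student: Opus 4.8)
The plan is to reduce the $L^\beta$ assertion to two separate ingredients and then glue them with a soft functional-analytic fact. The ingredients are: almost-everywhere convergence $g_n\to g$ of the normalized densities to the limit density ($g=\phi_\alpha$ in part (a), $g=\lambda$ in part (b)), and convergence of the $\beta$-th power integrals $\int_\Real g_n^\beta\,dx\to\int_\Real g^\beta\,dx$. The glue is the Brezis--Lieb lemma (equivalently the Riesz--Scheff\'e criterion), valid for every $0<\beta<\infty$: if $g_n\to g$ a.e.\ and $\sup_n\int g_n^\beta<\infty$, then $\int g_n^\beta-\int\abs{g_n-g}^\beta\to\int g^\beta$, so that norm convergence forces $\int_\Real\abs{g_n-g}^\beta\,dx\to0$. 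The hypothesis $g\in L^\beta$ (namely $\int g^\beta<\infty$) is exactly what $\beta>(1+\alpha)^{-1}$ secures in part (a), since $\phi_\alpha^\beta$ has tail $x^{-(\alpha+1)\beta}$; in part (b) no restriction is needed because $\lambda$ is exponentially thin and lies in $L^\beta$ for all $\beta>0$.

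Pointwise convergence is the routine step. Writing $g_n(x)=na_nf(a_nx+b_n)F^{n-1}(a_nx+b_n)$, in the Fr\'echet case ($b_n=0$) I would factor
\[
na_nf(a_nx)=\frac1x\cdot\frac{(a_nx)f(a_nx)}{\overline{F}(a_nx)}\cdot n\overline{F}(a_nx),
\]
and invoke the von Mises ratio (\ref{von_F}) tending to $\alpha$, the regular-variation relation $n\overline{F}(a_nx)\to x^{-\alpha}$, and $F^{n-1}(a_nx)\to\Phi_\alpha(x)$, to obtain $g_n(x)\to\alpha x^{-(\alpha+1)}e^{-x^{-\alpha}}=\phi_\alpha(x)$ for each $x>0$. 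In the Gumbel case Lemma \ref{G_u} rewrites $na_nf(a_nx+b_n)=\frac{u(b_n)}{u(a_nx+b_n)}\bigl(-nF(a_nx+b_n)\log F(a_nx+b_n)\bigr)$; since the ratio $u(b_n)/u(a_nx+b_n)\to1$ (a standard consequence of the von Mises condition (\ref{lem3_appendix})), while $-nF(a_nx+b_n)\log F(a_nx+b_n)\to e^{-x}$ and $F^{n-1}(a_nx+b_n)\to\Lambda(x)$, we get $g_n(x)\to e^{-x}e^{-e^{-x}}=\lambda(x)$ for each $x\in\Real$. These convergences are in fact locally uniform on the interior of the support.

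The substantive step is convergence of the $\beta$-norms. The lower bound $\int g^\beta\le\liminf\int g_n^\beta$ is immediate from Fatou's lemma, so the crux is the matching upper bound $\limsup\int g_n^\beta\le\int g^\beta$, equivalently uniform integrability of $\{g_n^\beta\}$ with no $\beta$-mass escaping to the endpoints of the support. I would split $\int_\Real g_n^\beta\,dx$ over a core $[\varepsilon,K]$ and its complement. On the core, local uniform convergence yields $\int_\varepsilon^K g_n^\beta\to\int_\varepsilon^K g^\beta$ by dominated convergence. For the right tail, a uniform Potter-type bound coming from the regular variation of $f$ gives, for large $n$, $na_nf(a_nx)\le Cx^{-(\alpha+1)}$ on $x\ge K$ in the Fr\'echet case (respectively an exponential bound in the Gumbel case), so $\int_K^\infty g_n^\beta\le C^\beta\int_K^\infty x^{-(\alpha+1)\beta}\,dx$, which vanishes as $K\to\infty$ precisely because $\beta>(1+\alpha)^{-1}$, matching the tail of $\int_K^\infty\phi_\alpha^\beta$. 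The global hypothesis $\int_\Real f^\beta\,dx<\infty$ is what keeps $\int g_n^\beta$ finite and prevents loss of $\beta$-mass in the bulk: after the substitution $y=a_nx+b_n$ one has $\int_\Real g_n^\beta\,dx=n^\beta a_n^{\beta-1}\int f(y)^\beta F^{(n-1)\beta}(y)\,dy$, and $\int f^\beta<\infty$ supplies the uniform domination needed there. The remaining small-$x$ region (respectively $x<t_n$ in the Gumbel case) is negligible because $F^{n-1}$ decays super-exponentially, the very mechanism isolated by the cutoff $\xi_n$ with $-\log F(\xi_n)\sim n^{-1/2}$ used in the body of the paper.

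The main obstacle is precisely this uniform tail control: ruling out escape of $\beta$-mass to the right endpoint while simultaneously matching the limiting tail. It is the one place where regular variation (through $\beta>(1+\alpha)^{-1}$ in the Fr\'echet case) and the global integrability $\int_\Real f^\beta\,dx<\infty$ are genuinely consumed; the Gumbel case is lighter since the limit tail is exponential and no lower bound on $\beta$ is forced, which is why part (b) imposes none. Once $\int g_n^\beta\to\int g^\beta$ is established, it automatically gives $\sup_n\int g_n^\beta<\infty$, and combining it with the a.e.\ convergence through the Brezis--Lieb identity delivers $\int_\Real\abs{g_n-g}^\beta\,dx\to0$ in both parts.
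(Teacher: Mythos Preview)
The paper does not prove this statement. Theorem~\ref{resnick_1} sits in Appendix~\ref{more results} as a quoted external result (Theorem~3 of de~Haan and Resnick (1982)), stated without argument and invoked only as a black box inside the proof of Theorem~\ref{rate_lem1}. There is therefore no ``paper's own proof'' to compare against.

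As for the soundness of your sketch on its own terms: the strategy---a.e.\ convergence plus convergence of the $\beta$-th power integrals, combined through Brezis--Lieb/Riesz--Scheff\'e---is a correct route and is in the same spirit as the original de~Haan--Resnick argument, which also splits the line into a core, a right tail handled by regular-variation (Potter) bounds, and a left region killed by the super-exponential decay of $F^{n-1}$ together with $\int f^\beta<\infty$. Two small corrections are worth noting. First, the uniform right-tail bound you write should carry a Potter slack: one only gets $na_nf(a_nx)\le C\,x^{-(\alpha+1)+\varepsilon}$ for $x\ge K$ and $n$ large, not the exact exponent; this is harmless because $\beta>(1+\alpha)^{-1}$ lets you pick $\varepsilon>0$ with $((\alpha+1)-\varepsilon)\beta>1$, so $\int_K^\infty x^{-((\alpha+1)-\varepsilon)\beta}\,dx\to0$. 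Second, your appeal to $\int f^\beta<\infty$ for the left/bulk region is the right mechanism, but the sentence ``supplies the uniform domination needed there'' hides the actual estimate: after $y=a_nx+b_n$ one has, on $\{y\le\xi_n\}$, the pointwise bound $g_n(x)^\beta\le n^\beta a_n^{\beta-1}f(y)^\beta F^{(n-1)\beta}(\xi_n)$, and the prefactor $n^\beta a_n^{\beta-1}\exp\{-(n-1)\beta\,n^{-1/2}\}$ tends to~$0$, which is what makes that piece negligible. With these two points made explicit your outline goes through.
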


\begin{thm}\label{ren_con1}(Theorem 3.1, Saeb (2014)) Suppose $F\in D(\Phi_\alpha)$ is absolutely continuous with pdf $f$ which is eventually positive and decreasing in left neighborhood of $\infty.$ If $\int_{\Real}(f(x))^\beta dx<\infty$ then for $\beta>1$
\[\lim_{n\to\infty}H_\beta(g_n)=H_\beta(\phi_\alpha).\]
\end{thm}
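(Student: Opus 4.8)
The plan is to reduce the statement to the $L_\beta$-convergence of densities furnished by Theorem~\ref{resnick_1}(a), and then transport that convergence through the $\beta$-th power integral and the logarithm that together define the R\'{e}nyi entropy in (\ref{Renyi_entropy}). Writing $\|\cdot\|_\beta$ for the $L_\beta(\Real)$ norm, the guiding observation is that for $\beta>1$ this is a genuine norm; hence the reverse triangle inequality applies and converts convergence of $g_n$ toward $\phi_\alpha$ in $L_\beta$ into convergence of the integrals $\int_{\Real}(g_n(x))^\beta\,dx$ that sit inside the logarithm.

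First I would check the hypotheses needed to invoke Theorem~\ref{resnick_1}(a). Because $f$ is eventually positive and nonincreasing in a left neighborhood of $\infty$ and $F\in\D(\Phi_\alpha)$, part (b) of Theorem~\ref{thm_von} guarantees that the von Mises condition (\ref{von_F}) holds. Moreover $\beta>1>(1+\alpha)^{-1}$ for every $\alpha>0$, and $\int_{\Real}(f(x))^\beta\,dx<\infty$ by assumption, so all requirements of Theorem~\ref{resnick_1}(a) are met and we obtain
\[
\int_{\Real}\abs{g_n(x)-\phi_\alpha(x)}^\beta\,dx\to 0,\qquad\text{that is,}\qquad \|g_n-\phi_\alpha\|_\beta\to 0,
\]
as $n\to\infty.$

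Next I would record that the limiting integral is finite and strictly positive: near $0$ the factor $e^{-x^{-\alpha}}$ forces integrability of $(\phi_\alpha)^\beta$, while near $\infty$ one has $\phi_\alpha(x)\sim\alpha x^{-(\alpha+1)}$, so $(\phi_\alpha)^\beta$ is integrable precisely because $\beta(\alpha+1)>1.$ The reverse triangle inequality for $\|\cdot\|_\beta$ then yields
\[
\abs{\,\|g_n\|_\beta-\|\phi_\alpha\|_\beta\,}\leq\|g_n-\phi_\alpha\|_\beta\to 0,
\]
whence $\|g_n\|_\beta\to\|\phi_\alpha\|_\beta$ and therefore, raising to the $\beta$-th power, $\int_{\Real}(g_n(x))^\beta\,dx\to\int_{\Real}(\phi_\alpha(x))^\beta\,dx$, a finite positive limit.

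Finally, since $t\mapsto (1-\beta)^{-1}\log t$ is continuous on $(0,\infty)$ and the limiting integral is strictly positive, applying this map to the convergent sequence $\int_{\Real}(g_n(x))^\beta\,dx$ gives $H_\beta(g_n)\to H_\beta(\phi_\alpha)$, which is exactly the assertion. The one delicate point is the passage from $L_\beta$-convergence of the densities to convergence of their $\beta$-th power integrals; the reverse triangle inequality settles this cleanly, but it relies essentially on $\beta>1$ so that $\|\cdot\|_\beta$ is subadditive, and on the strict positivity of the limiting integral so that the logarithm never meets its singularity.
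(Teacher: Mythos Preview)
Your argument is correct. Note, however, that the paper itself does not supply a proof of this statement: it is quoted in the appendix as Theorem~3.1 of Saeb~(2014), so there is no in-paper proof to compare against. The route you take---obtain the von~Mises condition~(\ref{von_F}) from Theorem~\ref{thm_von}(b), feed it into Theorem~\ref{resnick_1}(a) to get $\|g_n-\phi_\alpha\|_\beta\to 0$, then pass to $\int g_n^\beta\,dx\to\int\phi_\alpha^\beta\,dx$ by the reverse triangle inequality and finish with continuity of $t\mapsto(1-\beta)^{-1}\log t$ at the positive finite limit---is the natural one and almost certainly what the original reference does.
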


\begin{thm}\label{ren_con2}(Theorem 3.3, Saeb (2014)) Suppose $F\in D(\Lambda)$ is absolutely continuous with pdf $f$ which is eventually positive and decreasing in left neighborhood of $r(F).$ If $\int_{\Real}(f(x))^\beta dx<\infty$ then for $\beta>1$
\[\lim_{n\to\infty}H_\beta(g_n)=H_\beta(\lambda).\]
\end{thm}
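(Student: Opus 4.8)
The plan is to reduce the convergence of the R\'enyi entropies to the convergence of the integrals $I_n:=\int_{\Real}(g_n(x))^\beta\,dx$ toward $I:=\int_{\Real}(\lambda(x))^\beta\,dx$, and to extract the latter from the $L_\beta$-convergence of densities supplied by Theorem \ref{resnick_1}(b). Indeed, writing $H_\beta(g_n)=\frac{1}{1-\beta}\log I_n$ and $H_\beta(\lambda)=\frac{1}{1-\beta}\log I$ as in (\ref{Renyi_entropy}), it suffices to prove that $I_n\to I$ with $I\in(0,\infty)$, for then continuity of $\log$ at the positive point $I$ gives $H_\beta(g_n)\to H_\beta(\lambda)$ directly.

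First I would record the two elementary facts that legitimize this reduction. A direct estimate on $\lambda(x)=e^{-x}e^{-e^{-x}}$ shows $0<I=\int_{\Real}e^{-\beta x}e^{-\beta e^{-x}}\,dx<\infty$ for every $\beta>1$: the double-exponential factor annihilates the $e^{-\beta x}$ growth as $x\to-\infty$, while $e^{-\beta x}$ is integrable at $+\infty$; in fact one computes $I=\beta^{-\beta}\Gamma(\beta)>0$. On the other side, the substitution $y=a_nx+b_n$ in $g_n(x)=na_nf(a_nx+b_n)F^{n-1}(a_nx+b_n)$ gives $I_n=n^\beta a_n^{\beta-1}\int f(y)^\beta F^{(n-1)\beta}(y)\,dy\le n^\beta a_n^{\beta-1}\int f(y)^\beta\,dy<\infty$, using $F\le 1$ and the hypothesis $\int f^\beta<\infty$. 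Thus $\lambda$ and every $g_n$ lie in $L_\beta(\Real)$.

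Next I would invoke Theorem \ref{resnick_1}(b), whose hypothesis is the von Mises condition (\ref{lem3_appendix}). The bridge here is that the assumed eventual monotonicity of $f$ near $r(F)$, together with $F\in\D(\Lambda)$, forces (\ref{lem3_appendix}): an eventually monotone density in the Gumbel domain always satisfies the von Mises condition. Combined with $\int f^\beta<\infty$, Theorem \ref{resnick_1}(b) then yields $\int_{\Real}\abs{g_n(x)-\lambda(x)}^\beta\,dx\to 0$, i.e. $\|g_n-\lambda\|_\beta\to 0$ in the $L_\beta$-norm.

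Finally, since $\beta>1$ the functional $\|\cdot\|_\beta$ is a genuine norm, so the reverse form of Minkowski's inequality gives $\bigl|\,\|g_n\|_\beta-\|\lambda\|_\beta\,\bigr|\le\|g_n-\lambda\|_\beta\to 0$. Hence $\|g_n\|_\beta\to\|\lambda\|_\beta$, and raising to the $\beta$-th power yields $I_n=\|g_n\|_\beta^\beta\to\|\lambda\|_\beta^\beta=I\in(0,\infty)$; continuity of $\log$ at $I>0$ then delivers $H_\beta(g_n)\to H_\beta(\lambda)$, as claimed. I expect the only genuinely delicate point to be the passage from $F\in\D(\Lambda)$ plus eventual monotonicity of $f$ to the von Mises condition (\ref{lem3_appendix}) required by Theorem \ref{resnick_1}(b); everything else is the reverse triangle inequality in $L_\beta$ together with continuity of the logarithm.
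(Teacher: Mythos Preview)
The paper does not supply its own proof of this statement: Theorem~\ref{ren_con2} appears in the Appendix purely as a quoted result from Saeb~(2014), with no argument given here. There is therefore nothing in the present paper to compare your proposal against line by line.

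That said, your argument is a clean and correct route to the conclusion. Reducing $H_\beta(g_n)\to H_\beta(\lambda)$ to $\|g_n\|_\beta\to\|\lambda\|_\beta$ via continuity of $\log$ at the positive value $I=\beta^{-\beta}\Gamma(\beta)$, and then extracting the latter from $\|g_n-\lambda\|_\beta\to 0$ (Theorem~\ref{resnick_1}(b)) through the reverse triangle inequality, is exactly the natural functional-analytic reduction one would expect; it is also the style of argument Saeb~(2014) uses. Your identification of the only substantive step is accurate: the passage from ``$F\in\D(\Lambda)$ with eventually monotone density'' to the von~Mises condition~(\ref{lem3_appendix}) is the one place where real extreme-value input is needed beyond Theorem~\ref{resnick_1} itself. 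This implication does hold---it is the Gumbel-domain analogue of Theorem~\ref{thm_von}(b), and appears for instance as Proposition~1.1(b) of de~Haan and Resnick~(1982) (equivalently in Resnick~(1987), \S1.1)---so your invocation of Theorem~\ref{resnick_1}(b) is legitimate under the stated hypotheses. You might simply cite that result explicitly rather than leave it as an assertion.
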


\end{document}